\renewcommand{\phi}{\varphi}
\newbox\gnBoxA
\newdimen\gnCornerHgt
\newdimen\gnArgHgt
\newcommand{\gn}[2]{%
	\setbox\gnBoxA=\hbox{$#1$}%
	\gnArgHgt=\ht\gnBoxA%
	\ifnum     \gnArgHgt<\gnCornerHgt \gnArgHgt=0pt%
	\else \advance \gnArgHgt by -\gnCornerHgt%
	\fi \raise\gnArgHgt\hbox{$\ulcorner$} \box\gnBoxA %
	\raise\gnArgHgt\hbox{$\urcorner^{#2}$}}
\newcommand{\concat}{%
	\mathord{
		\mathchoice
		{\raisebox{1ex}{\scalebox{.7}{$\frown$}}}
		{\raisebox{1ex}{\scalebox{.7}{$\frown$}}}
		{\raisebox{.7ex}{\scalebox{.5}{$\frown$}}}
		{\raisebox{.7ex}{\scalebox{.5}{$\frown$}}}
	}
}
\DeclareMathSymbol{\mlq}{\mathord}{operators}{``}
\DeclareMathSymbol{\mrq}{\mathord}{operators}{`'}
\newcounter{bew}
\newcommand\bwz[2]{&(\arabic{bew})&&#1&&\text{#2}\stepcounter{bew}}
\newenvironment{nbeweis}{\setcounter{bew}{1}\csname align*\endcsname}{\csname endalign*\endcsname}
\renewcommand{\restriction}{\mathord{\upharpoonright}}
\DeclareMathOperator{\dom}{dom} %fuer domain
\newcommand{\Forall}{\raisebox{0.15em}{\scalebox{0.72}{$\backslash$}}\hspace{-0.18em}\forall}
\newcommand{\Exists}{\exists\hspace{-0.45em}\exists}
\newcommand{\Land}{\;\raisebox{-0.12em}{\begin{turn}{90}$\eqslantgtr$\end{turn}}\;}
\newcommand\hash{\mathbin{\mathpalette\xhash\relax}}
\newcommand{\xhash}[2]{\ooalign{%
		$#1\xxhash{#1}{-45}$\cr
		$#1\xxhash{#1}{45}$\cr
	}%
}
\newcommand{\xxhash}[2]{\rotatebox[origin=c]{#2}{$#1\parallel$}}
\newcommand{\ogeneric}[2][0.7]{%
	\vphantom{\oplus}\mathpalette\o@generic{{#1}{#2}}%
}
\newcommand{\o@generic}[2]{\o@@generic#1#2}
\newcommand{\o@@generic}[3]{%
	\begingroup
	\sbox\z@{$\m@th#1\oplus$}%
	\dimen@=\dimexpr\ht\z@+\dp\z@\relax
	\savebox\tw@[\totalheight]{$\m@th#1\bigcirc$}%
	\makebox[\wd\z@]{%
		\ooalign{%
			$#1\vcenter{\hbox{\resizebox{\dimen@}{!}{\usebox\tw@}}}$\cr
			\hidewidth
			$#1\vcenter{\hbox{\resizebox{#2\dimen@}{!}{$#1\vphantom{\oplus}{#3}$}}}$%
			\hidewidth
			\cr
		}%
	}%
	\endgroup
}
\newcommand{\opreceq}{\mathrel{\ogeneric[0.6]{\preceq}}}
\newcommand{\oaste}{\mathbin{\ogeneric[0.6]{\ast}}}
\newcommand{\oefunc}{\mathrel{\ogeneric[0.6]{e}}}
\newcommand{\oewedge}{\mathrel{\ogeneric[0.6]{\underline{\wedge}}}}
\theoremstyle{plain}
\newtheorem{thm}{Theorem}[section]
\newtheorem{lem}[thm]{Lemma}
\newtheorem{cor}[thm]{Corollary}
\theoremstyle{definition}
\newtheorem{definition}[thm]{Definition}
\title{On the Invariance of Gödel’s Second Theorem with regard to Numberings}
\author{Balthasar Grabmayr\thanks{This is the penultimate draft of a paper forthcoming in \textit{The Review of Symbolic Logic}. Please cite the final version.}\\Humboldt University of Berlin}
\date{}
\begin{document}
	
	\maketitle
	
	\epigraph{Let there be assigned to any term its symbolic number, to be used in calculation as the term itself is used in reasoning.}{(Leibniz, \citeyear{Leibniz1679}/\citeyear{Parkinson1966}, p.~17)}
	
	\begin{abstract}
		The prevalent interpretation of Gödel's Second Theorem states that a sufficiently adequate and consistent theory does not prove its consistency. It is however not entirely clear how to justify this informal reading, as the formulation of the underlying mathematical theorem depends on several arbitrary formalisation choices. In this paper I examine the theorem's dependency regarding Gödel numberings. I introduce \textit{deviant} numberings, yielding provability predicates satisfying Löb's conditions, which result in provable consistency sentences. According to the main result of this paper however, these \enquote{counterexamples} do not refute the theorem's prevalent interpretation, since once a natural class of \textit{admissible} numberings is singled out, invariance is maintained.
	\end{abstract}
	
	\section{Introduction}
	
	According to the prevalent philosophical interpretation of Gödel’s Second Theorem, a sufficiently adequate and consistent theory $T$ does not prove its consistency. Upon further examination however, it is not entirely clear how to justify this informal reading, as the formulation of the underlying mathematical theorem depends on several arbitrary implementation choices. \cite{Visser2011} for instance locates three sources of indeterminacy in the formalisation of the consistency statement for a theory~$T$:
	\begin{enumerate}[noitemsep,label=\Roman*.]
		\item the choice of a proof system;
		\item the choice of a Gödel numbering;
		\item the choice of a specific formula representing the axiom set of $T$.
	\end{enumerate}
	
	By affirming the theorem's prevalent interpretation we (implicitly) rely on its \textit{invariance} regarding these formalisation choices. That is, we believe that any admissible choice yields a formal consistency sentence which is unprovable in $T$.
	
	This belief is only partially supported by metamathematical results. While employing fixed choices for (I) and (II), \cite{Feferman1960} provides the invariance of Gödel's theorem with respect to~(III), i.e., regarding a certain class of formul{\ae} which represent $T$'s axioms. Visser's (2011) approach rests on fixed choices for (II) and (III) but is independent of (I). The aim of the present paper is to examine the theorem's dependency regarding~(II), which to the best of my knowledge has hitherto not been treated in the literature.
	
	I follow the popular custom of basing the notion of provability on Löb's conditions. It is widely believed that these conditions are already sufficient to derive the unprovability of the resulting consistency sentences. This is however not the case, as I will show in this paper, since Gödel's theorem based on Löb's conditions in fact depends on the underlying numbering. Consider for instance a numbering which assigns even numbers to theorems and odd numbers to all other expressions of a given language.
	The formula $\exists y \ x = \overline{2} \times y$ then serves as a binumeration of the codes of theorems, which in addition satisfies Löb's conditions. However, the resulting consistency sentence
	is provable.
	
	A similar construction shows that the set of true sentences is arithmetically definable, once such deviant numberings are employed. Thus also the prevalent interpretation of Tarski’s theorem relies on a more careful choice of the underlying Gödel numbering.
	
	I argue that these \enquote{counterexamples} do not threaten these theorems’ prevalent interpretations, since the employed numberings are inadmissible choices in the formalisation process. In fact, I show that once natural classes of \textit{admissible} numberings are singled out, invariance of these theorems holds.
	
	This work can be seen to provide further justification to the prevalent philosophical readings of these limitative results as sketched above. For instance, \cite{Detlefsen1986} famously argues that only once \textit{every} sentence expressing \mbox{$T$-consistency} is shown to be unprovable in $T$, the prevalent philosophical reading of Gödel's Second Theorem can be maintained. Under the assumption that Löb's conditions are necessary for a formula to express $T$-provability,\footnote{\label{ftn:lob}Löb's conditions indeed play an important role in the literature as necessary conditions for a formula to express $T$-provability. \cite{Dyson1991}, for instance, takes these conditions as \enquote{the axiomatic formulation of minimal requirements that a meaningful concept of theoremhood ought to satisfy} (p.~256). A similar view is reported in \citep{Bezboruah1976} and \citep{Kennedy}. For an extensive philosophical justification of the adequacy of Löb's conditions the reader is referred to \citep{Auerbach1992} and \citep{Roeper2003}. For arguments against their adequacy see \citep{Detlefsen1986, Detlefsen2001}.} the result obtained in this paper serves as a further step towards this goal. Namely, it establishes that no matter which admissible numbering is employed in the formulation of Löb's conditions, every formula expressing \mbox{$T$-provability} results in a consistency sentence which is not provable in $T$. A fully satisfactory account along these lines clearly also requires consideration of other aspects of formalisation not treated here, such as the different ways consistency sentences are constructed from formul{\ae} expressing provability.\footnote{For a study of this aspect of formalisation the reader is referred to \citep{Kurahashi2019}.}
	
	The plan of this paper is as follows. Section 2 provides an introduction to the popular version of Gödel's Second Theorem based on Löb's conditions as well as \enquote{counterexamples} thereof which are based on deviant numberings. I discuss the notion of a numbering's admissibility in Section 3, where I argue that computability is a necessary condition for the admissibility of a numbering. A precise and robust notion of computability then allows for a (meta-)mathematical restatement of the claim that Gödel's theorem is invariant regarding admissible numberings, which is proved in Section 4. Finally, Section 5 contains some concluding remarks.

	\section{Dependency Results}
	
	\subsection{Technical Preliminaries}
	\label{subsection:prelimin}
	Let the language $\mathcal{L}$ be given by the signature $\sigma(\mathcal{L}) = \{ \mlq \mathsf{0} \mrq, \mlq \mathsf{S} \mrq , \mlq  + \mrq , \mlq \times \mrq, \mlq = \mrq , \mlq \neg \mrq , \mlq \wedge \mrq , \mlq \forall \mrq  \}$. Consider the alphabet $A$ which consists of $\sigma(\mathcal{L})$ together with the symbols $\mlq \mathsf{v} \mrq$ , $\mlq \prime \mrq$ , $\mlq ( \mrq$ and $\mlq ) \mrq$. Let $A^\ast$ denote the set of finite strings over $A$ (without the empty string), and let $\ast$ and $\equiv$ denote the concatenation operation and the equality relation on $A^\ast$ respectively. For better readability, we often omit the use of quotation symbols and the concatenaton operation. For instance, for $s,t \in A^\ast$ we write $s  \prime$ instead of $s \ast \mlq \prime \mrq$ and $(\mathsf{S} t)$ instead of $ \mlq (\mathsf{S} \mrq \ast t \ast \mlq ) \mrq $, etc. We take an $\mathcal{L}$-expression to be any element of $A^\ast$. The set $\mathit{Var}$ of $\mathcal{L}$-variables is generated from $\mathsf{v}$ by appending a finite number of strokes. The sets $\mathit{Term}, \mathit{Fml} \subseteq A^\ast$ contain the well-formed $\mathcal{L}$-terms and $\mathcal{L}$-formul{\ae} given in infix notation respectively.\footnote{The results of this paper also apply to any language $\mathcal{L}^+ \supseteq \mathcal{L}$ and finite alphabet~$A^+ \supseteq A$ such that the well-formed $\mathcal{L}^+$-expressions are contained in $(A^+)^\ast$.} Any used symbol not contained in $\mathcal{L}$ is to be understood as the usual abbreviation by symbols of $\mathcal{L}$. For instance, we define $x \leq y \equiv \exists z \ z + x = y$, where $x,y,z \in \mathit{Var}$ and $\exists$ is defined by means of $\forall$ and $\neg$ as usual. Since the language $\mathcal{L}$ is fixed, expressions, terms, formul{\ae} and sentences are throughout the paper implicitly assumed to belong to~$\mathcal{L}$. For the sake of brevity, we sometimes use the symbols ${\Rightarrow}$, ${\Land}$, ${\Forall}$ and ${\Exists}$ as abbreviations of the meta-theoretical connectives \enquote{implies}, \enquote{and}, \enquote{for all} and \enquote{there is} respectively.
	
	Corresponding to their alphabetical expressions, the following \enquote{constructor} operations on strings play an important role throughout this paper:
	\begin{align*}
	\underline{\prime} & \colon \mathit{Var} \to \mathit{Var}, \text{ given by } x \mapsto x  \prime ;\\
	\underline{\mathsf{S}} & \colon \mathit{Term} \to \mathit{Term}, \text{ given by } t \mapsto (\mathsf{S} t);\\
	\underline{+} & \colon \mathit{Term} \times \mathit{Term} \to \mathit{Term}, \text{ given by } (s,t) \mapsto (s + t);\\
	\underline{\times} & \colon \mathit{Term} \times \mathit{Term} \to \mathit{Term}, \text{ given by } (s,t) \mapsto (s \times t);\\
	\underline{=} & \colon \mathit{Term} \times \mathit{Term} \to \mathit{Fml}, \text{ given by } (s,t) \mapsto (s = t);\\
	\underline{\neg} & \colon \mathit{Fml} \to \mathit{Fml}, \text{ given by } \phi \mapsto (\neg \phi);\\
	\underline{\wedge} & \colon \mathit{Fml} \times \mathit{Fml} \to \mathit{Fml}, \text{ given by } (\phi,\psi) \mapsto (\phi \wedge \psi);\\
	\underline{\forall} & \colon \mathit{Var} \times \mathit{Fml} \to \mathit{Fml}, \text{ given by } (x,\phi) \mapsto ( \forall x  \phi );
	\end{align*}
	
	By slight abuse of notation, both the standard interpretation of $\mathcal{L}$ and its domain is denoted by $\mathbb{N}$. As usual, $\overline{n}$ denotes the (standard) numeral of the number $n$. Let $\phi$ be an expression and let $\alpha$ be a numbering of $A^\ast$, i.e., $\alpha \colon A^\ast \to \mathbb{N}$ is injective. We write $\gn{\phi}{\alpha}$ for the numeral of the $\alpha$-code $\alpha(\phi)$ of $\phi$ and we write $\gn{\phi}{}$ if no specific numbering is indicated.
	
	We call a function $\alpha \colon B \to \mathbb{N}$ recursive, if $B \subseteq \mathbb{N}$ is decidable and there exists a partial recursive function $\alpha' \colon \mathbb{N} \to \mathbb{N}$ with $\dom(\alpha') = B$ and $\alpha' \restriction B = \alpha$. Let $\alpha_1$ and $\alpha_2$ be numberings of given sets $S_1$ and $S_2$ respectively. We call a function $f \colon S_1 \to S_2 $ recursive relative to $\langle \alpha_1, \alpha_2 \rangle$ if $\alpha_2 \circ f \circ \alpha_1^{-1}$ is recursive. If moreover $\alpha_1 = \alpha_2$, we also say that $f$ is recursive relative to $\alpha_1$.
	
	In this paper we consider consistent and \enquote{sufficiently adequate} $\mathcal{L}$-theories. We take any \enquote{sufficiently adequate} $\mathcal{L}$-theory to be recursively enumerable and to extend the Tarski-Mostowski-Robinson theory~$\mathsf{R}$, which is given by the following axiom schemata:\footnote{See \citep[p.\ 53]{TarskiMostowskiRobinson}. For further information about $\mathsf{R}$ the reader may also consult \citep{Visser2014}.}
	\begin{enumerate}[noitemsep,label=$\mathsf{R\arabic*}$.]
		\item $\overline{m} + \overline{n} = \overline{m + n}$
		\item $\overline{m} \times \overline{n} = \overline{m \cdot n}$
		\item $\overline{m} \neq \overline{n}, \text{ for } m \neq n$
		\item $x \leq \overline{n} \rightarrow \bigvee_{k \leq n} x = \overline{k}$
		\item $x \leq \overline{n} \vee \overline{n} \leq x$
	\end{enumerate}
	
	\subsection{Löb's Conditions}
	Perhaps the most popular approach to abstracting away from specific formalisation choices regarding consistency sentences rests on the so-called Hilbert-Bernays-Löb derivability conditions, or for short: Löb's conditions, which are defined as follows. A formula $\mathsf{Pr}(x)$ satisfies Löb's conditions for $T$, in short: Löb($T$), if for all sentences $\phi$ and $\psi$:
	\begin{description}[noitemsep]
		\item[Löb1] $T \vdash \phi \implies T \vdash \mathsf{Pr}(\gn{\phi}{})$;
		\item[Löb2] $T \vdash \mathsf{Pr}(\gn{\phi}{}) \wedge \mathsf{Pr}(\gn{\phi \rightarrow \psi}{}) \rightarrow \mathsf{Pr}(\gn{\psi}{}) $;
		\item[Löb3]
		$T \vdash \mathsf{Pr}(\gn{\phi}{}) \rightarrow \mathsf{Pr}(\gn{\mathsf{Pr}(\gn{\phi}{})  }{})$.
	\end{description}
	Let $\bot$ be some fixed $T$-contradiction, such as $\mathsf{0} = \mathsf{S0}$. This approach results in the following famous generalisation of Gödel's Second Theorem due to \cite{HilbertBernays1939} and \cite{Lob1955}, which constitutes the starting point of this study:
	\begin{thm}
		\label{standardGodel2}
		$T \not\vdash \neg \mathsf{Pr}(\gn{\bot}{})$, for all formul{\ae} $\mathsf{Pr}(x)$ satisfying \textnormal{Löb($T$)}.
	\end{thm}
	
	Indeed, this version of Gödel's theorem neither resorts to the underlying proof system nor to the specific way $T$-provability and $T$'s axiom set are formalised.\footnote{This is certainly true of the theorem's formulation. Whether Löb's conditions can be \textit{justified} independently of these choices is however a more subtle issue and depends on the specific strategy of justification (see for instance \citep{Detlefsen2001} and \citep{Visser2016}).} However, its formulation employs a specific and arbitrarily chosen Gödel numbering. In order to also abstract away from this source of indeterminacy, we first make the numbering used in the formulation of Löb's conditions explicit, where \textit{prima facie} any injective function $\alpha \colon A^\ast \to \mathbb{N}$ qualifies as a numbering. We then take a formula $\mathsf{Pr}^\alpha(x)$ to satisfy Löb's conditions \textit{relative to $\alpha$} for $T$, in short: Löb($T, \alpha$), if for all sentences $\phi$ and $\psi$:
	\begin{description}[noitemsep]
		\item[$\text{Löb1}^{\bm{\alpha}}$] $T \vdash \phi \implies T \vdash \mathsf{Pr}^\alpha(\gn{\phi}{\alpha})$;
		\item[$\text{Löb2}^{\bm{\alpha}}$] $T \vdash \mathsf{Pr}^\alpha(\gn{\phi}{\alpha}) \wedge \mathsf{Pr}^\alpha(\gn{\phi \rightarrow \psi}{\alpha}) \rightarrow \mathsf{Pr}^\alpha(\gn{\psi}{\alpha}) $;
		\item[$\text{Löb3}^{\bm{\alpha}}$]
		$T \vdash \mathsf{Pr}^\alpha(\gn{\phi}{\alpha}) \rightarrow \mathsf{Pr}^\alpha(\gn{\mathsf{Pr}^\alpha(\gn{\phi}{\alpha})  }{\alpha}) $.
	\end{description}
	
	It is widely believed that Löb's conditions are sufficient to allow for the \enquote{modal reasoning} involved in the proof of Löb's theorem, and thus in particular, of Theorem~\ref{standardGodel2} (for such a proof see e.g.\ \cite[p. 230]{Smith}). As this \enquote{modal reasoning} does not resort to the underlying numbering, one might expect Theorem~\ref{standardGodel2} to be invariant in the following sense: whatever numbering $\alpha$ is chosen, $T \not\vdash \neg \mathsf{Pr}(\gn{\bot}{\alpha})$ for every formula $\mathsf{Pr}^{\alpha}(x)$ satisfying~Löb($T, \alpha$).
	
	\subsection{Deviant Numberings}
	\label{subsection:deviant}
	
	I now show that this invariance claim is false, as there exist highly artificial and deviant numberings. To start with, for any given set $S \subseteq A^\ast$ of expressions an injective function $\alpha \colon A^\ast \to \mathbb{N}$ can be defined such that the set of $\alpha$-codes of $S$ is $\Delta^0_0$-binumerable.\footnote{If no theory is specified, this should be read as \enquote{(bi-)numerable in $\mathsf{R}$} (by a $\Delta^0_0$-formula).} For instance, define $\alpha$ such that
	\begin{equation*}
	\alpha(\phi) \text{ is }
	\begin{cases}
	\text{even} & \text{ if $\phi \in S$,}\\
	\text{odd} & \text{ if $\phi \not\in S$. }
	\end{cases}
	\end{equation*}
	This can simply be done by enumerating both $S$ and $A^\ast \setminus S$ (without repetitions) and subsequently assigning the number $2 \cdot k$ to the $k$-th element of the enumeration of $S$ and the number $2 \cdot k +1$ to the $k$-th element of the enumeration of $A^\ast \setminus S$. Clearly, these enumerations are possibly non-effective.
	
	Let $S := T^{\vdash}$ be the deductive closure of $T$ and let $\alpha$ be defined as above, such that the set of $\alpha$-codes of $T$-theorems are exactly the even numbers. The formula $\mathsf{Pr}^\alpha(x) \equiv \exists y < x \ x = \overline{2} \times y$ then satisfies Löb($T, \alpha$) and we have $T \vdash \neg \mathsf{Pr}^\alpha(\gn{\bot}{})$ by the $\Sigma^0_1$-completeness of $T$, in violation of the above invariance claim. The reason for this is that in addition to Löb's conditions,  the fixed point construction is also a crucial ingredient in the \enquote{modal reasoning} towards Löb's theorem. And indeed, the Diagonal Lemma fails with regard to $\alpha$, i.e., there is no ($\alpha$-)fixpoint of $\neg \mathsf{Pr}^\alpha(x)$. To see this, assume that there exists $G$ such that $T \vdash \neg \mathsf{Pr}^\alpha(\gn{G}{\alpha}) \leftrightarrow G$, which implies $T \not\vdash G$. But then $T \vdash \neg \mathsf{Pr}^\alpha(\gn{G}{\alpha})$ and therefore $T \vdash G$, yielding a contradiction.
	
	Similarly, setting $S := \mathsf{Th}(\mathbb{N})$ yields a numbering $\beta$ such that $\exists y < x \ x = \overline{2} \times y$ binumerates $\{ \beta(\phi) \mid \mathbb{N} \models \phi \}$, thus violating (both the semantic and the syntactic version of) Tarski's Theorem.
	
	While $\alpha$ and $\beta$ appear to be highly deviant numberings which resist any arithmetisation of the usual syntactic properties, less artificial numberings can be constructed for certain theories $T \supseteq \mathsf{R}$ which still violate Gödel's Second Theorem and the two versions of Tarski's Theorem respectively.\footnote{I would like to thank Albert Visser for raising this point and suggesting the initial idea underlying the construction of $\delta$.} For instance, based on a more careful construction, both $\alpha$ and $\beta$ can be taken to be monotonic (see $\zeta$ in Appendix~6). That is, the Gödel number of a string is larger than the Gödel numbers of its substrings. There also exists a numbering $\delta$ such that the functions $\underline{\mathsf{S}}$, $\underline{+}$, $\underline{\times}$, $\underline{\prime}$, $\underline{=}$, $\underline{\wedge}$ and $\underline{\forall}$ are recursive relative to $\delta$, yet the set of $T$-theorems is binumerable by a $\Delta^0_0(\mathsf{exp})$-formula $\mathsf{Pr}^{\delta}(x)$ (see Appendix~6).\footnote{The complexity class $\Delta^0_0(\mathsf{exp})$ contains the bounded formul{\ae} of $\mathcal{L}$ augmented with a unary function symbol $\mathsf{exp}$ for the exponentiation function $\lambda n.2^n$. We can also speak of $\Delta^0_0(\mathsf{exp})$-formul{\ae} in the context of $\mathcal{L}$, since every formula of $\Delta^0_0(\mathsf{exp})$ can be transformed into an $\mathcal{L}$-formula. This proceeds by the usual term elimination algorithm which replaces each occurrence of $\mathsf{exp}$ by an $\mathcal{L}$-formula.} In particular, $\mathsf{Pr}^{\delta}(x)$ once again satisfies Löb($T, \delta$), but $T \vdash \neg \mathsf{Pr}^{\delta}(\gn{\bot}{\delta})$.
	
	There exists moreover a numbering $\eta$ such that all the functions $\underline{\mathsf{S}}$, $\underline{+}$, $\underline{\times}$, $\underline{\prime}$, $\underline{=}$, $\underline{\neg}$, $\underline{\wedge}$ and $\underline{\rightarrow}$ are recursive relative to $\eta$ and the set $\{ \eta(\phi) \mid \mathcal{N} \models \phi \}$ of true sentences is binumerable by a $\Delta^0_0(\mathsf{exp})$-formula $\mathsf{Pr}^{\eta}(x)$ (see Appendix~6). In particular, the formula $\mathsf{Pr}^{\eta}(x)$ satisfies Löb($T, \eta$) for sound theories $T \supseteq \mathsf{R}$, but $T \vdash \neg \mathsf{Pr}^{\eta}(\gn{\bot}{\eta})$.
	
	Both numberings $\delta$ and $\eta$ thus yield deviant results, even though they allow for the arithmetisation of a large portion of syntactic properties and operations. For instance, the $\mathsf{num}$-function (mapping numbers to their standard numerals) and the substitution function for terms as well as for atomic formul{\ae} can be binumerated relative to both numberings. However, since $\underline{\neg}$ and $\underline{\forall}$ are not recursive relative to $\delta$ and $\eta$ respectively, the arithmetisation of the substitution function for (complex) formul{\ae} fails for both numberings. This can be seen as the reason why the Diagonal Lemma does not hold when employing these numberings, since its proof crucially relies on the arithmetisation of the substitution function.\\
	
	In what follows I will argue that these results do not refute Gödel and Tarski's classical theorems, since the underlying numberings are inadmissible for the contexts they are employed in.\footnote{\label{ftn:meaningpost}One could also argue that the employed formul{\ae} do not adequately express \mbox{$T$-provability} by resorting to their deviant \textit{quantificational structure}. For instance, $\exists y \ x = \overline{2} \times y$ can be argued to be \textit{intensionally incorrect}, since it does not structurally resemble the usual description of a meta-theoretical provability predicate, given by \enquote{$\Exists y \colon y \text{ is a proof of x}$}, where \enquote{$y$ is proof of $x$} is in turn described by existential quantification over finite sequences which contain axioms and are closed under rules of inference, etc. While this is surely a reasonable response, it is notoriously difficult to make this \textit{resemblance criterion} for expressibility precise \citep[pp.~676f.]{HalbachVisser1}. What is more, in this paper we are concerned with the invariance of Theorem~\ref{standardGodel2}, whose formulation does not resort to any such structural features of the underlying provability predicates.} Our initial na\"ive approach to numberings is thus not tenable and has to be replaced by a refined account based on \textit{admissible} numberings, ruling out such deviant constructions. This is precisely the aim of the next section.
	
	\section{Admissibility of Numberings}
	\label{section:admissibilityofnumberings}
	
	It is notoriously difficult to make the informal notion of the admissibility of a numbering precise. However, for the purposes of this paper it is sufficient to isolate a necessary condition of admissibility, namely, the computability of a numbering. More specifically, computable numberings will here be characterised by \textit{computable simulations} of the structure of strings together with the concatenation operation. Once the notion of computable simulation is made precise, I will discuss its adequacy and compare it to three other proposals found in the literature. I will conclude this section by considering other approaches to the admissibility of numberings based on compositionality and monotonicity rather than computational considerations.
	
	\subsection{Computable Simulations}
	\label{subsection:computablesimulations}
	
	The reader is reminded that the set of expressions is given by the set $A^\ast$ of strings over the alphabet  $A$. Let $\alpha \colon A^\ast \to \mathbb{N}$ be any injective function and let $G := \alpha(A^\ast)$ be the set of Gödel numbers, or $\alpha$-codes, of $A^\ast$. For any operation $e \colon (A^\ast)^k \to A^\ast$, we call the function $\oefunc \colon G^k \to G$ given by
	\[
	{\oefunc}(n_1,\ldots,n_k) = \alpha(e(\alpha^{-1}(n_1),\ldots,\alpha^{-1}(n_k)  )  ),
	\]
	the ($\alpha$-)tracking function of $e$. To illustrate, $\oefunc$ is defined such that the diagram
	\begin{equation}
	\label{commutdiagram}
	\begin{tikzcd}
	A^\ast \times \cdots \times A^\ast \arrow[r, "e"] \arrow[d, "\alpha \times \cdots \times \alpha"] & A^\ast \arrow[d, "\alpha"] \\
	G \times \cdots \times G \arrow[r, "\oefunc"] & G
	\end{tikzcd}
	\end{equation}
	commutes, where $\alpha \times \cdots \times \alpha(a_1, \ldots, a_k) := (\alpha(a_1),\ldots, \alpha(a_k))$. The function $\oefunc$ can thus be seen to simulate $e$ on the set of $\alpha$-codes. For this reason we also sometimes call $\oefunc$ the $e$-simulating function (on $G$).
	
	Intrinsic to the structure of strings is the concatenation operation $\ast$, which allows to generate any string of $A^\ast$ from elements of $A$. The operation $\oaste$ can then be seen to simulate the concatenation operator on the set $G$ of Gödel numbers. It is easy to check that the associativity of $\ast$ is inherited by $\oaste$. Moreover, as in the case of strings over $A$, each $\alpha$-code in~$G$ can be finitely generated from the set $\alpha(A)$ of $\alpha$-codes of~$A$ by the $\ast$-simulating operation $\oaste$. For instance, for any string $s \in A^\ast$ there are alphabetical expressions $s_1, \ldots, s_n \in A$ such that
	\[
	s \equiv s_1 \ast \cdots \ast s_n.
	\]
	The $\alpha$-code of $s$ is then simply the result of \enquote{\mbox{$\oaste$-concatenating}} the $\alpha$-codes of the alphabetical expressions $s_i$, i.e.,
	\[
	\alpha(s) = \alpha(s_1) \oaste \cdots \oaste \alpha(s_n).
	\]
	This situation can be naturally described in algebraic terms, by conceiving of $A^\ast$ together with $\ast$ as a semigroup. The above considerations then show that also $(G,\oaste)$ forms a semigroup and $\alpha$ is a (semigroup) isomorphism.
	
	Thus far, no requirements have been imposed on the numbering $\alpha$. That $\alpha$ \textit{computably simulates} $A^\ast$ can now be informally expressed by (i) requiring that for each number it is \textit{decidable} whether or not it $\alpha$-codes an expression, and by (ii) requiring that Gödel numbers are \textit{mechanically} generated from the $\alpha$-codes of alphabetic expressions in $A$. Since $G$ is a set of natural numbers and $\oaste$ is an arithmetical operation which generates $G$ from $\alpha(A)$, both requirements can be explicated via the Church-Turing Thesis as follows:
	
	\begin{definition}
		\label{defi:compnumbering}
		A numbering $\alpha \colon A^\ast \to \mathbb{N}$ is a computable simulation \textnormal{(}of $A^\ast$\textnormal{)}, if $G$ is decidable and $\oaste$ is recursive, where $G$ and $\oaste$ are given as above.
	\end{definition}
	
	This notion of a computable simulation can be traced back to \citep[chapter IV]{Montague1957} as well as to the foundational texts \citep{Rabin1960} and \citep{Malcev1961} of the field of computable algebra.\footnote{The definition typically used in computable algebra does not however require numberings to be functional. Indeed, as opposed to injectivity, the functionality of a numbering is not essential as long as its \enquote{kernel} is decidable. However, in order to simplify the presentation I follow the common practice in metamathematics and require the functionality of numberings.} The reader who questions the need for such a notion is reminded that the explication of a numbering's computability via the Church-Turing Thesis is not entirely straight-forward. Turing machines for instance only operate over strings. A sensible application of Turing-computability to numberings thus requires a representation of their co-domain $\mathbb{N}$ by strings, which in general yields different sets of computable numberings \citep{Montague1960}. The task of singling out admissible or natural representations has proven to be fraught with difficulty, purportedly even resulting in a circular conceptual analysis of (numerical) computability, see \citep{Boker2008,Boker2010}, \citep{Quinon2018}, \citep{Rescorla2007} and \citep{Shapiro1982,Shapiro2017}.
	
	As will be shown in Section~\ref{subsection:equivofnumberings}, computable simulations do not depend on such arbitrary representational choices, thus providing an adequate explication of computability for our purposes.
	
	\subsection{The Epistemic Role of Numberings}
	\label{subsection:epistemicrole}
	
	While we have seen that computable simulations explicate the notion of computability adequately, the question remains why admissible numberings should be computable in the first place.
	
	Recall the philosophical interpretation of Gödel's Second Theorem, according to which $T$ does not prove its consistency. This interpretation's justification rests on the hypothesis that we can formulate the question whether or not $T$ proves a sentence which expresses $T$'s consistency in a meaningful way. Indeed, the employed Gödel numbering enables us to interpret certain arithmetical closed terms and formul{\ae} as (names of) syntactic objects and properties of syntactical objects respectively. Let us call this the \textit{semantic role} of Gödel numberings. It allows us to not only reason in $T$ about numbers but also about expressions via their Gödel codes.
	
	The prevalent philosophical reading of Gödel's theorem as a limitative result rests on yet another crucial feature of numberings. What we are essentially concerned with, is the question of whether or not the consistency sentence is derivable by \textit{using only the theory $T$'s resources}. It is therefore pivotal in this context that admissible numberings ensure that reasoning about expressions via their representations in $\mathcal{L}$ requires no resources which lie outside the formal system~$T$. Let us call this the \textit{epistemic role} of Gödel numberings.
	
	As we have seen in Section~\ref{subsection:computablesimulations}, every numbering induces a pair $(G, \oaste)$, where $G$ is a set of codes and $\oaste$ is a $\ast$-simulating operation on $G$, which represents the structure of expressions. That this representation does not employ resources exceeding $T$ can be understood in virtue of $T$ \enquote{recognising} or \enquote{knowing} or \enquote{verifying} certain facts and properties concerning $(G, \oaste)$. Since $T$ is a formal theory, it can be seen to \enquote{recognise}, \enquote{know} and \enquote{verify} facts and properties by means of proving (formul{\ae} which express) them.
	
	So far, none of these facts and properties have been specified. A minimal approach is to require that $T$ \enquote{recognises} the two constituents of the given representation, namely the set $G$ and the operation $\oaste$. Accordingly, $T$ \enquote{knows} whether or not ${n \in G}$, and whether or not $l \oaste m=n$, for numbers $l$, $m$ and $n$. That is, $T$ binumerates both $G$ and $\oaste$ (i.e., its graph). But this is tantamount to $G$ being decidable and $\oaste$ being recursive.\footnote{See for instance \citep[Corollary II.7]{TarskiMostowskiRobinson}.} Hence by Definition~\ref{defi:compnumbering}, the underlying numbering is a computable simulation. 
	
	Note that even though the admissibility of a numbering is analysed in terms of $T$'s resources, the extracted notion of computable simulation is theory-independent. This rests on the fact that theories extending $\mathsf{R}$ binumerate exactly the same sets and functions, namely decidable and recursive ones respectively.
	
	However, one might require $T$ to recognise further properties regarding $(G, \oaste)$. For instance, $T$ might reasonably be required to verify the functionality of $\oaste$, i.e., to prove that $\oaste$ is total (cf.~Section~\ref{subsection:intensionality}). Since different theories extending $\mathsf{R}$ prove the totality of different classes of functions in general, this requirement yields a notion of admissibility which is sensitive to~$T$.
	
	As we will show in Section 4, even the minimal approach places enough restriction on admissible numberings to establish the invariance of Gödel's Second Theorem (see Theorem~\ref{maintheorem:invariance}). It is therefore sufficient for the purpose of this paper to conclude that admissible numberings are computable simulations.\footnote{ As a result of the above analysis, the computability of a numbering can be seen to provide a constraint which bars resources beyond $T$ to enter into reasoning about expressions. This is reminiscent of a popular conception of computation, according to which computational processes \enquote{have access only to the formal properties of \ldots representations} \citep[p. 231]{Fodor1981}, not to their content \citep[p. 254]{Egan2010}. The the involved notions' lack of clarity notwithstanding, this conception may serve as an informal yet insightful heuristic to see why the employed numberings are inadmissible in the deviant cases above. For instance, which $\eta$-code is assigned to an expression $\chi$ is determined by whether or not $\mathbb{N} \models \chi$. The fact that the definition of $\eta$ has access to this semantic property of expressions, namely their truth values, thus renders $\eta$ inadmissible. Similarly, both the \mbox{$\delta$-code} and the $\zeta$-code of an expression $\chi$ is defined by recourse to whether or not $T \vdash \chi$. Since the set of non-theorems has (classically, i.e., relative to standard numberings) complexity $\Pi^0_1$ this property once again exceeds the resources of $T$. Thus in all these cases, the employed numberings violate the \enquote{formality condition} imposed by computability, as they significantly resort to non-formal properties of strings, exceeding $T$'s resources. Appealing to these properties in the numberings' definitions causes sets of expressions which are classically non-decidable or non-arithmetical to be coded by sets which are decidable relative to the respective deviant numbering. This discrepancy lies at the root of the resulting deviancy.}\\
	
	In the context of the semantic version of Tarski's theorem, we are concerned with the question of whether the set of true sentences is definable in the language $\mathcal{L}$, rather than with provability in a formal theory $T$. Accordingly, the epistemic role of numberings in this context is to ensure that reasoning about expressions does not resort to resources exceeding the language $\mathcal{L}$, rather than the theory $T$ as in the context of Gödel's theorem. This can be made precise by requiring that $\mathcal{L}$ \enquote{captures} or \enquote{expresses} relevant facts and properties about admissible representations $(G, \oaste)$ by means of defining them. Once again resorting to a minimal approach then yields the requirement that both $G$ and $\oaste$ (i.e., its graph) are definable in the standard interpretation $\mathbb{N}$ of $\mathcal{L}$. We call any numbering which gives rise to such a representation $(G, \oaste)$ an \textit{arithmetical simulation \textnormal{(}of $A^\ast$\textnormal{)}}.
	
	According to the proposed analysis, the notion of admissibility is thus sensitive to the considered metamathematical context. While admissible numberings are computable simulations in the context of Gödel's Second Theorem and other limitative results which primarily pertain to provability-in-a-theory (such as the syntactic version of Tarski's Theorem), they are required to be arithmetical simulations in the context of the semantic version of Tarski's Theorem.
	
	In the remainder of this paper I will consider  the admissibility of numberings exclusively in the context of Gödel's Second Theorem (with Theorem~\ref{semanticTarski} and footnote~\ref{ftn:semanticTarski} as exceptions):
	
	\begin{description}
		\item[Computable Simulativity]
		Every admissible numbering is a computable simulation.
	\end{description}
	
	I now turn briefly to other proposals found in the literature which tie computational considerations to the admissibility of numberings.
	
	\subsection{Other Routes to Admissibility}
	\label{subsection:othercriteria}
	
	Let $\gamma$ denote the standard numbering introduced in \citep[Section 15.1]{Smith}.\footnote{Our alphabet $A$ deviates from the one considered in \citep{Smith}. In the remainder of this paper I assume that $\gamma$ is slightly adapted in order to code strings over $A$.} According to \cite{Smith} \enquote{the key feature of our Gödelian scheme [$\gamma$] is this: there is a pair of algorithms, one of which takes us from an expression to its code number, the other of which takes us back again from the code number to the original expression} (p.~126). Given any other comparable numbering $\alpha$, converting the $\alpha$-code of a certain expression~$\phi$ into its $\gamma$-code involves running through two algorithms (first mapping $\alpha(\phi)$ to $\phi$ and then mapping $\phi$ to $\gamma(\phi)$), which by assumption do not involve any open-ended searches (ibid.). Conversely, $\gamma$-codes can be converted into $\alpha$-codes in a similar fashion. Hence, \enquote{there is a p.r.\ function $tr$ which \enquote{translates} code numbers according to [$\alpha$] into code numbers under our official Gödelian scheme [$\gamma$], and another p.r.\ function $tr^{-1}$ which converts code numbers under our scheme back into code numbers under scheme [$\alpha$]} (ibid.).\footnote{The restriction to p.r.~functions for $tr$ and $tr^{-1}$ is justified by construing the aforementioned algorithms without open-ended \enquote{do while} loops (see \citep[Proposition I.5.8]{Odifreddi1}). However, I believe that there is no clear reason to impose this restriction on the algorithms, thus allowing $tr$ and $tr^{-1}$ to be recursive functions. This however does not bear on the subsequent discussion.} Let us call any numbering $\alpha$ for which such a pair of p.r.\ translation functions exist, \textit{computably translatable} to $\gamma$. The following criterion of admissibility is thus extracted by \cite{Smith}:
	
	\begin{description}
		\item[Computable Translatability]
		\textit{A numbering is admissible iff it is computably translatable to the standard numbering $\gamma$.}
	\end{description}
	
	While I agree with Smith's initial analysis that the key feature of admissibility consists essentially in intuitive computability, the proposed criterion fails to capture this feature in a conceptually satisfactory way. This deficiency can be seen to result from general difficulties in giving a precise account of the computability of numberings (see the remark following Definition~\ref{defi:compnumbering}). To circumvent these difficulties, Smith requires admissible numberings to be computably translatable to some designated standard numbering $\gamma$. The conceptual adequacy of the resulting criterion thus crucially rests on the admissibility of $\gamma$. For instance, employing one of the deviant numberings of Section~\ref{subsection:deviant} as a reference numbering instead of $\gamma$, would clearly render the criterion inadequate. What justifies the choice of $\gamma$ as the reference numbering is exactly its intuitive computability. Hence, instead of taking the conceptually prior feature of computability as characterising admissibility, a conceptual surrogate is employed.\footnote{This argument is borrowed from \cite[p.~268]{Rescorla2007} who discusses the admissibility of domain representations in the context of extending the Turing-Thesis to computational models over other domains.} Furthermore, Smith's justification for the existence of p.r.~translation functions between numberings which share the same \enquote{key feature} (i.e., come with a pair of encoding and decoding algorithms) crucially relies on informal reasoning about computability. A more satisfactory approach would be to first formally characterise the conceptually prior notion of intuitive computability of numberings (cf.~Definition~\ref{defi:compnumbering}) and then mathematically \textit{prove} the existence of respective translation functions between them (cf.~Theorem~\ref{theorem:numberinginvariance} and Corollary~\ref{cor:compsimequivtosequencadmiss} below), employing the Church-Turing Thesis only in its \enquote{interpretive use} \cite[p. 275]{Smith}.
	
	A second approach can be found in \citep{Smullyan1961}, who bases the concept of admissibility of numberings on representability in \textit{elementary formal systems} (for definitions see \citep[p.~6]{Smullyan1961}). These systems can be seen as devices intended to \enquote{explicate the notion of \enquote{definability by recursion}} \citep[p.~2]{Smullyan1961} directly operating on strings, without the usual recourse to numberings. A set $S$ of strings which is definable by recursion in some elementary formal system is called formally representable. If in addition to $S$ also the complement of $S$ is formally representable, $S$ is called solvable. The admissible numberings of $S$ are then taken to be exactly those which preserve these recursion theoretic properties. More precisely, a numbering $\alpha \colon S \to \mathbb{N}$ is called \textit{EFS-admissible}, if for every subset $U \subseteq S$ the following holds:
	$U$ is formally representable iff $\alpha(U)$ is recursively enumerable and $U$ is solvable iff $\alpha(U)$ is decidable. This yields:
	
	\begin{description}
		\item[EFS-Admissibility]
		\textit{A numbering is admissible iff it is EFS-admissible.}
	\end{description}
	
	Once again this definition has a computational motivation, since the concept of formal representability is intended to capture the concept of computability, in the sense that formally representable sets characterise sets which are generated by a \enquote{computing machine} \citep[p.~9]{Smullyan1961}.
	
	An account of admissibility which \textit{prima facie} \enquote{strictly contains} the notion of computable simulativity is introduced in \citep{Manin}. A numbering $\alpha$ is called \textit{sequence-admissible}, if the image of $\alpha$ is decidable and the length function, the projection function given by $\langle i , \langle a_1, \ldots, a_n \rangle \rangle \mapsto a_i$ as well as the concatenation function are recursive.\footnote{More precisely\label{ftn:relrecursive}, the functions are recursive relative to $\langle \alpha,\operatorname{id} \rangle$,  $\langle \operatorname{id} \times \alpha, \alpha \rangle$ and $\langle \alpha \times \alpha, \alpha \rangle$ respectively.} This yields the following criterion:
	
	\begin{description}
		\item[Sequence-Admissibility]
		\textit{A numbering is admissible iff it is sequence-admissible.}
	\end{description}
	
	Despite the different conceptions of computability underlying these proposals, all three notions will be shown in Section 4 to be extensionally equivalent to computable simulativity (see Corollary~\ref{cor:compsimequivtosequencadmiss}). This endows the criterion of computable simulativity with a desirable amount of robustness.
	
	\subsection{Compositionality and Monotonicity}
	\label{subsection:otherroutes}
	
	I conclude this section by discussing other approaches to the admissibility of numberings based on compositionality and monotonicity rather than computational considerations. As we have seen in Section~\ref{subsection:epistemicrole}, numberings have a semantic role as they represent strings by natural numbers. It thus seems reasonable to require admissible numberings to satisfy some principle of compositionality. Accordingly, the semantic value of every complex expression can be taken to be determined by its structure and the semantic values of its subexpressions.\footnote{The following discussion does not depend on the specific conception of semantic value.} Since for a given numbering $\alpha$ the semantic value of a string is its $\alpha$-code, the compositionality of $\alpha$ can be understood as $\alpha \colon A^\ast \to G$ being an \mbox{$\Omega$-homomorphism}, where the signature $\Omega$ contains a binary (concatenation) function symbol $\concat$. But as we have already seen in Section~\ref{subsection:computablesimulations}, every numbering~$\alpha$ induces such a homomorphism. Hence every numbering is compositional in this sense.
	
	Even resorting to the \enquote{mode of presentation} does not yield any proper restriction on numberings. It is a common practice to first assign numbers to the alphabetical symbols and then to define the Gödel number of a string as a function of the Gödel numbers of its entries (e.g.~based on prime factorisation in the tradition of \citep{Godel1931} or on $k$-adic notation following \citep{Smullyan1961, Smullyan92}). In fact, any numbering $\alpha$ of $A^\ast$ can be reconstructed in this standard \enquote{bottom-up} fashion: firstly, map each symbol $s$ of the alphabet $A$ to its $\alpha$-code. Secondly, consider the \mbox{$\ast$-simulating} partial function $\oaste \colon \mathbb{N}^2 \to \mathbb{N}$, given in Section~\ref{subsection:computablesimulations}. These two steps then resemble the usual \enquote{bottom-up} definition of numberings and uniquely determine our initially given numbering, since $\alpha$ is the unique function which extends the assignment given in step 1 and is compositional with regard to the partial function given in step 2, i.e., is an $\Omega$-homomorphism.\footnote{Algebraically speaking, $(A^\ast,\ast)$ has the universal mapping property for the class of $\Omega$-semigroups over $A$, see \citep[§10 \& §11]{Burris1981}.} Every numbering irrespective of its presentation can thus be reconstructed in the above manner, which is the prevalent mode of presentation for standard Gödel numbers found in the literature.
	
	In addition to the \enquote{bottom-up} fashion of defining numberings, it is ubiquitous in the literature to employ a \textit{monotonic} partial operation $\oaste$ in step 2. This gives rise to the following property of Gödel numberings:
	
	\begin{definition}
		A numbering $\alpha$ of $A^\ast$ is called monotonic, if $\alpha(s) \leq \alpha(t)$ for any $s,t \in A^\ast$ such that $s$ is a subexpression \textnormal{(}i.e., a substring\textnormal{)} of $t$.
	\end{definition}
	
	Monotonicity is a prevalent property shared by all standard numberings found in the literature (which are known to the author). One reason for this custom is that monotonicity ensures that the above bottom-up mode of presentation yields an \textit{injective} function and hence a numbering. Monotonicity thus serves as a technical constraint warranting the intended output of a particular construction principle (see for instance \citep[p. 220]{HilbertBernays1939}). Moreover, monotonicity provides a low arithmetical complexity of formul{\ae} representing syntactic properties, allowing the use of convenient tools (such as $\Sigma^0_1$-completeness) when proving certain metamathematical theorems.
	
	Apart from these technical considerations, on which conceptual grounds can monotonicity be justified? Can monotonicity be extracted as a necessary condition by analysing the admissibility of numberings as we did in the case of computability?
	
	As we have seen in Section~\ref{subsection:epistemicrole}, a crucial role of Gödel numberings is to allow reasoning about expressions in an arithmetical theory $T$. Admissible numberings were thus required to simulate certain structural properties of strings. As we have seen for instance, the operation $\oaste$ induced by $\alpha$ simulates the concatenation operation~$\ast$ on $A^\ast$ in virtue of the commutativity of diagram~\ref{commutdiagram} (in~\ref{subsection:computablesimulations}). The subexpression-relation~${\preceq}$ on $A^\ast$ can be reasonably taken as yet another important structural feature of strings. Hence for any admissible numbering, the induced semigroup $(G,\oaste)$ of Gödel numbers should simulate ${\preceq}$ appropriately. This can be made precise by extending the signature $\Omega$ by a binary relation symbol ${\sqsubseteq}$, yielding ${\Omega^+ := \{ \concat, {\sqsubseteq} \}}$, and by construing $(A^\ast,\ast,\preceq )$ as an $\Omega^+$-structure. A numbering $\alpha$ may then be called \textit{$\preceq$-preserving} if $\alpha \colon A^\ast \to G$ is an \mbox{$\Omega^+$-isomorphism}. It can be easily seen that in a similar way as in Section~\ref{subsection:computablesimulations}, for every~$\alpha$ a \enquote{\mbox{$\preceq$-simulation}} $\opreceq$ on $G$ can be defined, such that $(G,\oaste,\opreceq)$ is an \mbox{$\Omega^+$-structure} and $\alpha$ is an $\Omega^+$-isomorphism. Hence, once again, $\preceq$-preservation does not impose any restriction on numberings.
	
	By additionally drawing on the epistemic role of numberings, the induced structure $(G,\oaste,\opreceq)$ can be required to not exceed $T$'s resources, analogously to the analysis in Section~\ref{subsection:epistemicrole}. Accordingly, it can be required that $T$ \enquote{recognises}, and thus binumerates, the relation $\opreceq$, which is tantamount to $\opreceq$ being decidable. Yet, as in the proof of Corollary~\ref{cor:compsimequivtosequencadmiss} we can show that for every computable simulation~$\alpha$ also the \enquote{$\preceq$-simulating relation} $\opreceq$ is decidable. This condition therefore does not pose any additional constraint given the requirement of computable simulativity.
	
	As in the case of the $\ast$-simulating operation $\oaste$, one may require $T$ to verify further principles regarding $\preceq$. For instance, $T$ might be required to prove that any two strings $s,t$ are subexpressions of $s \ast t$. This results in a more restrictive notion of admissibility which is sensitive to the considered theory $T$. Since these principles are formulated in purely string-theoretical terms without access to the arithmetical properties of their codes, they result in limitations, for instance, regarding the growth rate of admissible numberings (cf.~Section~\ref{subsection:intensionality}). However, the constraint of monotonicity cannot be obtained using this approach.
	
	As suggested by an anonymous referee, another route to obtain monotonicity may proceed by conceiving of $\leq$ and $\preceq$ as parthood relations on numbers and strings respectively. Taking parthood as an important structural feature of strings, admissible numberings may then be required to preserve structure in virtue of representing the parthood relation $\preceq$ on strings by the parthood relation $\leq$ on numbers. I however do not think that this approach is satisfactory for our purposes, for the following reasons.
	
	Firstly, assuming that both relations ${\preceq}$ and ${\leq}$ are indeed similar in structure by being parthood relations, they still significantly differ with regard to another important structural aspect. While ${\leq}$ is a total order relation, there are strings which are incomparable with respect to~$\preceq$. For this reason, there exists no numbering such that ${\preceq}$ is represented by ${\leq}$, i.e., that ${\opreceq} = {\leq}$, clearly rendering the above requirement absurd. To require that ${\preceq}$ is represented by ${\leq}$ in virtue of ${\opreceq}$ being contained in ${\leq}$, i.e., that the underlying numbering is monotonic, appears to be an ad-hoc reaction to this structural discrepancy rather than based on conceptual grounds.
	
	Secondly, even if the adequacy of this last step is granted, further problems arise by this conception of structure preservation. This can be illustrated by basing the structure of strings on so-called successor functions $\mathsf{S}_a$, which append the alphabetical symbol $a \in A$ to any input string (see \citep{Corcoran1974}). Instead of employing the concatenation operation $\ast$, the set of strings can be generated from the empty string by using successor functions $\mathsf{S}_a$ for each $a \in A$. According to the above approach, admissible numberings can then be required to preserve this structure in virtue of representing the empty string and the successor functions~$\mathsf{S}_a$ by their arithmetical counterparts, namely, by $0$ and the successor operation ${n \mapsto n+1}$. However, even the requirement that (the graph of) the tracking function of \textit{some} $\mathsf{S}_a$ is contained in (the graph of) the arithmetical successor function cannot be satisfied by any Gödel numbering. As in the case of monotonicity, this requirement stems from a loose and almost metaphorical conception of structure similarity and preservation. In some sense, the empty string and the successor functions $\mathsf{S}_a$ indeed have the structure of $0$ and the arithmetical successor operation, namely in virtue of being generators and constructors respectively. However, they once again differ crucially in other structural respects, resulting in the absurdity of the extracted requirement.
	
	Thirdly, while I agree that admissible numberings should preserve the structure of the subexpression relation $\preceq$, it is not clear why structure preservation should entail constraints on the arithmetical content of $\opreceq$, such as the containment of $\opreceq$ in $\leq$, in the first place. Rather, I take an admissible numbering $\alpha$ to preserve the structure of $\preceq$ in virtue of the decidability of whether or not a number $n$ $\alpha$-codes a subexpression with $\alpha$-code $m$ for any $m,n \in \mathbb{N}$, or that the arithmetisations of certain additional string-theoretic properties regarding $\preceq$ are provable in $T$, as extracted from my initial analysis given above. As we have already seen, monotonicity can not however be derived from this conception of structure preservation.
	
	Finally, the very assumption that both ${\preceq}$ and ${\leq}$ are parthood relations appears to rely on specific and quite arbitrary representational choices of strings and numbers. While $\preceq$ may be understood as a parthood relation on strings via their natural interpretation as geometric objects, it is less clear how $\leq$ can be given such a mereological interpretation. One way is to represent numbers as strings of strokes. Another approach is to represent numbers as Von Neumann ordinals, and to understand the parthood relation on sets as the containment relation $\subseteq$ (see e.g.~\citep{Lewis1991}). Then $\leq$ is indeed a parthood relation on numbers. However, this interpretation breaks down for other reasonable representations, such as Zermelo ordinals or equivalence classes of finite sets under the equivalence relation of equinumerosity. 
	Yet another approach is to construe strings and numbers as free algebras and to extract a parthood relation from their underlying term algebras (see \citep{Burris1981}). In the case of numbers, this approach indeed yields $\leq$ as our parthood relation. If we take strings to be generated by the concatenation operation, then $A^\ast$ can be viewed as a free semigroup and we extract $\preceq$ as a parthood relation on strings. However, as we have seen above, we can also take strings to be generated from the empty string by the successor functions $\mathsf{S}_a$. This results in a different free algebra, according to which the parts of any string are exactly its initial strings. Similarly, parthood holds between strings and their final strings if we conceive of $\mathsf{S}_a$ as prefixing, instead of appending, $a$ to input strings.
	
	In addition to these conceptual reservations, the obtained results also suggest that monotonicity is a property orthogonal to the invariance problems considered in this paper.\footnote{This is not to say that monotonicity cannot serve as a reasonable and well-motivated constraint in a different metamathematical context.} Namely, there exist both monotonic and non-monotonic numberings which satisfy Gödel's Second Theorem as well as Tarski's theorems (take any standard numbering and Visser's (\citeyear{Visser1989}) self-referential numbering $\hash$ respectively), as well as monotonic and non-monotonic numberings which violate these theorems (e.g., $\zeta$ and $\delta$ respectively).
	
	Consequently, in what follows, admissible numberings are not required to be monotonic. This also permits a more general approach, since monotonicity constitutes a proper constraint on computable simulations.\footnote{Visser's (\citeyear{Visser1989}) $\hash$ is an example of a non-monotonic computable simulation.}
	
	\section{Invariance of Gödel's Second Theorem}
	\label{section:proofofinvariance}
	
	Before we turn to the proofs of the desired invariance claims, it will be convenient to introduce an equivalence relation on numberings, which preserves important computational properties. By a powerful theorem due to \cite{Malcev1961}, all admissible numberings will be shown to be equivalent in this sense. From this we can in particular conclude that computable simulativity is extensionally equivalent to the criteria of computable translatability, sequence-admissibility and EFS-admissibility, introduced in Section~\ref{subsection:othercriteria}.
	
	\subsection{Equivalence of Numberings}
	\label{subsection:equivofnumberings}
	
	\begin{definition}[\cite{Manin}]
		\label{defi:equivalentnumberings}
		Let $S$ be a set. We call two numberings $\alpha$, $\beta$ of $S$ equivalent \textnormal{(}and write $\alpha \sim \beta$\textnormal{)}, if $\alpha \circ \beta^{-1} \colon \beta(S) \to \mathbb{N}$ and $\beta \circ \alpha^{-1} \colon \alpha(S) \to \mathbb{N}$ are recursive.
	\end{definition}
	
	It can be easily checked that $\sim$ is a partial equivalence relation, i.e., $\sim$ is symmetric and transitive. If we require numberings to have a decidable image, then $\sim$ is also reflexive. In particular, $\sim$ is an equivalence relation on the set of computable simulations of $S$.
	
	A subset $R \subseteq S$ is called decidable, recursively enumerable, arithmetical relative to a numbering $\alpha$, if the set $\alpha(R)$ has the respective property. The following lemma shows that these properties are invariant with regard to equivalent computable simulations.\footnote{This is in principle Lemma VII.1.5 in \citep{Manin}.}
	
	\begin{lem}
		\label{lemma:manin:invarianceofequivalentnumb}
		Let $S_i$ be sets and let $\alpha_i$ and $\beta_i$ be numberings of $S_i$ such that ${\alpha_i \sim \beta_i}$, for $i \in \{1, \ldots ,k\}$ and $k \in \mathbb{N}$. Then for any subset $R \subseteq S_1 \times \ldots \times S_k$ the set $\alpha_1 \times \ldots \times \alpha_k (R) := \{ \langle \alpha_1(s_1), \ldots, \alpha_k(s_k) \rangle \mid \langle s_1,\ldots,s_k \rangle \in R \}$ is decidable, recursively enumerable, arithmetical iff $\beta_1 \times \ldots \times \beta_k (R)$ has the respective property.
	\end{lem}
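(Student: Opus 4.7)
The plan is to show that the equivalence of numberings $\alpha_i \sim \beta_i$ supplies, for each $i$, two recursive ``translation'' maps $t_i := \beta_i \circ \alpha_i^{-1} \colon \alpha_i(S_i) \to \mathbb{N}$ and $t_i^\ast := \alpha_i \circ \beta_i^{-1} \colon \beta_i(S_i) \to \mathbb{N}$, whose domains $\alpha_i(S_i)$ and $\beta_i(S_i)$ are decidable by our convention on recursive functions with restricted domain. By the injectivity of the $\alpha_i$ and $\beta_i$, one has the key identity
\[
\beta_1 \times \ldots \times \beta_k(R) \;=\; \bigl\{ (t_1(m_1), \ldots, t_k(m_k)) \,:\, (m_1, \ldots, m_k) \in \alpha_1 \times \ldots \times \alpha_k(R) \bigr\},
\]
and symmetrically with $t_i^\ast$. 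By the symmetry of $\sim$, it suffices to establish one direction of each biconditional, namely that the listed properties transfer from $\alpha_1 \times \ldots \times \alpha_k(R)$ to $\beta_1 \times \ldots \times \beta_k(R)$.

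For decidability I would argue directly: given $(n_1, \ldots, n_k) \in \mathbb{N}^k$, use the decidability of each $\beta_i(S_i)$ to check whether all $n_i \in \beta_i(S_i)$; if not, reject; if so, compute $m_i := t_i^\ast(n_i)$ using that $t_i^\ast$ is recursive, and accept iff $(m_1, \ldots, m_k) \in \alpha_1 \times \ldots \times \alpha_k(R)$, which is decidable by hypothesis. For recursive enumerability, enumerate $\alpha_1 \times \ldots \times \alpha_k(R)$ and apply $t_i$ componentwise to obtain an enumeration of $\beta_1 \times \ldots \times \beta_k(R)$; the resulting procedure is total on its input and produces exactly the required tuples.

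For the arithmeticity clause I would invoke the fact that every recursive function has a $\Sigma_1$-definable graph, so there is an arithmetical formula $\tau_i(x,y)$ defining the graph of $t_i$ on $\alpha_i(S_i)$. If $\alpha_1 \times \ldots \times \alpha_k(R)$ is arithmetical, witnessed by some $\mathcal{L}$-formula $\phi(\vec{x})$, then
\[
\psi(\vec{y}) \;:\equiv\; \exists x_1 \cdots \exists x_k \bigl( \phi(x_1, \ldots, x_k) \wedge \tau_1(x_1, y_1) \wedge \ldots \wedge \tau_k(x_k, y_k) \bigr)
\]
is an arithmetical formula defining $\beta_1 \times \ldots \times \beta_k(R)$, again using injectivity of the numberings to ensure uniqueness of the witnessed $x_i$.

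I do not anticipate a substantial obstacle here; the content of the lemma is essentially the observation that recursive images and preimages preserve each of the three closure properties, combined with the careful bookkeeping forced by the fact that the translation functions $t_i,t_i^\ast$ are only partial on $\mathbb{N}$. The only delicate point is making sure one invokes decidability of $\beta_i(S_i)$ (and, dually, $\alpha_i(S_i)$) when evaluating the translations outside their natural domains, which is precisely what the built-in decidability clause in the definition of a recursive function with restricted domain guarantees.
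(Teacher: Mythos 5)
Your proof is correct. The paper gives no argument of its own for this lemma --- it simply cites Lemma VII.1.5 of Manin in a footnote --- and your direct verification (translation via the recursive maps $t_i$ and $t_i^\ast$, using the decidability of $\alpha_i(S_i)$ and $\beta_i(S_i)$ built into the paper's definition of a recursive function with restricted domain, plus the $\Sigma_1$-definability of their graphs for the arithmetical case) is exactly the standard argument that citation stands in for.
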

	
	The next theorem due to \cite{Malcev1961} serves as a powerful tool in proving invariance.\footnote{A proof of a more general version of this theorem can be found in \citep[Section 4]{Malcev1961}. This theorem crucially relies on the finiteness of our alphabet $A$.}
	
	\begin{thm}
		\label{theorem:numberinginvariance}
		Any two computable simulations $\alpha$ and $\beta$ of $A^\ast$ are equivalent, i.e., $\alpha \sim \beta$.
	\end{thm}
	
	From Lemma~\ref{lemma:manin:invarianceofequivalentnumb} and Theorem~\ref{theorem:numberinginvariance} we can conclude the extensional equivalence of computable simulativity and the three criteria considered in Section~\ref{subsection:othercriteria}.
	
	\begin{cor}
		\label{cor:compsimequivtosequencadmiss}
		Let $\alpha$ be a numbering of $A^\ast$. The following are equivalent:
		\begin{enumerate}[noitemsep]
			\item $\alpha$ is a computable simulation of $A^\ast$;
			\item $\alpha$ is computably translatable to the standard numbering $\gamma$;
			\item $\alpha$ is sequence-admissible;
			\item $\alpha$ is EFS-admissible.
		\end{enumerate}
	\end{cor}
	
	\begin{proof}
		$(3 \Rightarrow 1)$ and $(4 \Rightarrow 1)$ are immediate. $(1 \Leftrightarrow 2)$ and $(1 \Rightarrow 4)$ follow from Theorem~\ref{theorem:numberinginvariance} and Lemma~\ref{lemma:manin:invarianceofequivalentnumb}.
		
		$(1 \Rightarrow 3)$: It is easy to check that Smith's (\citeyear{Smith}) numbering $\gamma$ is a computable simulation as well as sequence-admissible. Then the length function and the concatenation function is recursive relative to $\langle \gamma,\operatorname{id} \rangle$ and $\langle \gamma \times \gamma, \gamma \rangle$ respectively (see footnote~\ref{ftn:relrecursive}). For any computable simulation $\alpha$ of $A^\ast$, these functions are also recursive relative to $\langle \alpha,\operatorname{id} \rangle$ and  $\langle \alpha \times \alpha, \alpha \rangle$ respectively, since $\alpha \sim \gamma$ (Theorem~\ref{theorem:numberinginvariance}) and r.e.\ sets are invariant with regard to equivalent numberings by Lemma~\ref{lemma:manin:invarianceofequivalentnumb}.
	\end{proof}
	
	\subsection{Invariance Proofs}
	
	We start by proving the invariance of the semantic version of Tarski's Theorem, which is also shown in \cite[p. 240]{Manin}:\footnote{\label{ftn:semanticTarski}In order to prove Theorem~\ref{semanticTarski} it is sufficient to require that $\alpha$ is an arithmetical simulation, as defined in Section~\ref{subsection:computablesimulations}. In fact, the proof of Theorem~\ref{theorem:numberinginvariance} can be adapted in order to show that all arithmetical simulations are \enquote{arithmetically equivalent}, i.e., the translations of two arithmetical simulations are arithmetical (see Definition~\ref{defi:equivalentnumberings}). Then a version of Lemma~\ref{lemma:manin:invarianceofequivalentnumb} can be proven, showing that arithmetical sets are preserved by arithmetically equivalent numberings. \cite[p.~23]{Smullyan92} contains a similar observation. This slightly stronger theorem establishes the invariance of the semantic version of Tarski's Theorem regarding numberings which are admissible in the context of this theorem (cf.~Section~\ref{subsection:computablesimulations}).}
	
	\begin{thm}
		\label{semanticTarski}
		For all computable simulations $\alpha$ \textnormal{(}of $A^\ast$\textnormal{)}, the set $\{ \alpha(\phi) \mid \mathbb{N} \models \phi  \}$ is not arithmetical.
	\end{thm}
	
	\begin{proof}
		Let $\alpha$ be any computable simulation of $A^\ast$. Since $\gamma$ is standard, Tarski's Theorem holds with regard to $\gamma$, i.e., $\{ \gamma(\phi) \mid \mathbb{N} \models \phi \}$ is not arithmetical. By Lemma~\ref{lemma:manin:invarianceofequivalentnumb} and Theorem~\ref{theorem:numberinginvariance} also $\{ \alpha(\phi) \mid \mathbb{N} \models \phi  \}$ is not arithmetical.
	\end{proof}
	
	In order to establish the invariance of Gödel's Second Theorem, we first formalise certain properties of the equivalence of numberings in $\mathsf{R}$.
	
	\begin{definition}	\label{defi:injrepr}
		Let $\alpha \colon B \to \mathbb{N}$ be a function with $B \subseteq \mathbb{N}$ and let $f(x,y) \in \mathit{Fml}$ be a binumeration of \textup(the graph of\textup) $\alpha$. We call $f(x,y)$ an \textit{inj-binumeration} of $\alpha$ if $\mathsf{R} \vdash \forall x, y\, ( f(x,\overline{m}) \wedge f(y,\overline{m}) \rightarrow x=y )$, for all $m \in \alpha(B)$.
	\end{definition}
	
	\begin{lem}	\label{lemma:existreprasinjective}
		Let $B \subseteq \mathbb{N}$. For each injective recursive function $\alpha \colon B \to \mathbb{N}$ there exists an inj-binumeration thereof.
	\end{lem}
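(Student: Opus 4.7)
The plan is to exhibit a standard least-witness modification of any given binumeration of $\gamma$. Since $A$ is decidable and $\gamma\restriction A$ is recursive, the graph $G_\gamma = \{(n,\gamma(n)) : n \in A\}$ is a recursive subset of $\mathbb{N}^2$. By the classical representability theorem for $\mathsf{R}$, $G_\gamma$ is binumerable, so I would fix some binumeration $g(x,y)$ of $G_\gamma$ to start with. Then I would set
\[
f(x,y) := g(x,y) \wedge \forall z < x\,\neg g(z,y),
\]
with $z < x$ read as $z \leq x \wedge z \neq x$, and verify that this $f$ both binumerates $\gamma$ and satisfies the inj-clause.

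For the binumeration property, the negative direction is free: whenever $\gamma(n) \neq m$ (or $n \notin A$), $\mathsf{R} \vdash \neg g(\bar n, \bar m)$, and this immediately refutes $f(\bar n, \bar m)$. For the positive direction, fix $(n,m) \in G_\gamma$. The first conjunct is $\mathsf{R}$-provable by choice of $g$. For the second, injectivity of $\gamma$ ensures $(k,m) \notin G_\gamma$ for every $k < n$, so $\mathsf{R} \vdash \neg g(\bar k, \bar m)$ for each such $k$; axiom R4 lets us pass from these finitely many refutations to $\mathsf{R} \vdash \forall z < \bar n\,\neg g(z,\bar m)$, and so $\mathsf{R} \vdash f(\bar n,\bar m)$.

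For the injectivity clause, fix $m \in \gamma(A)$ with $\gamma(n) = m$, and argue inside $\mathsf{R}$ that $f(x,\bar m)$ forces $x = \bar n$; the desired $\forall x,y$-statement follows at once. So assume $f(x,\bar m)$, i.e., $g(x,\bar m)$ together with $\forall z < x\,\neg g(z,\bar m)$. By R5, either $x \leq \bar n$ or $\bar n \leq x$. In the first case R4 reduces to a finite disjunction $x = \bar k$ for $k \leq n$; injectivity of $\gamma$ gives $\mathsf{R} \vdash \neg g(\bar k, \bar m)$ for every $k \neq n$, ruling out all options except $x = \bar n$. In the second case, if $\bar n \neq x$ then $\bar n < x$, and the least-witness clause of $f$ would yield $\neg g(\bar n,\bar m)$, contradicting $\mathsf{R} \vdash g(\bar n,\bar m)$; so again $x = \bar n$.

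The only delicate point is the last one: $\mathsf{R}$ does not prove antisymmetry, linearity, or induction for $\leq$ on arbitrary variables, so the uniqueness argument cannot be carried out by a generic ordering fact. Everything has to be reduced to R4 and R5, both of which only speak about $\leq$ between a variable and a numeral; the key observation is that one side of the comparison, namely $\bar n$, is always a numeral, which is exactly what makes R4/R5 applicable and the least-witness trick go through in this very weak base theory.
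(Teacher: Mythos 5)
Your proof is correct and takes essentially the same approach as the paper's: start from a binumeration of the recursive graph of $\gamma$, adjoin a least-witness/uniqueness clause, verify the positive direction of the binumeration via $\mathsf{R4}$ applied to the finitely many refutations $\neg g(\overline{k},\overline{m})$ for $k<n$, and establish the inj-clause by an $\mathsf{R5}$ case split in which one side of every comparison is the numeral $\overline{n}$, which is exactly what makes $\mathsf{R4}$/$\mathsf{R5}$ applicable. The only (cosmetic) difference is that the paper's extra clause is $\forall z\leq x\,(f'(z,y)\rightarrow x=z)$ rather than your $\forall z<x\,\neg g(z,y)$, and the paper builds its starting binumeration as a conjunction of a binumeration of $A$ with one of a total recursive extension of $\gamma$; the verifications are otherwise the same.
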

	
	\begin{proof}
		Let $\alpha \colon B \to \mathbb{N}$ be an injective recursive function and let $\alpha'$ be a total recursive function extending $\alpha$ (see Section~\ref{subsection:prelimin}). By Theorem II.6 of \citep{TarskiMostowskiRobinson} there is a binumeration $g(x,y)$ of $\alpha'$ and a binumeration $\beta(x)$ of $B$. Then ${f'(x,y) \equiv \beta(x) \wedge g(x,y)}$ is a binumeration of $\alpha$. We define
		\[
		f(x,y) \equiv f'(x,y) \wedge  \forall z \leq x\, ( f'(z,y) \rightarrow x=z ).
		\]	
		In order to show that $f(x,y)$ numerates $\alpha$, let $n, m \in \mathbb{N}$ be such that $\alpha(n)=m$. Since $f'(x,y)$ binumerates $\alpha$, we have $ \mathsf{R} \vdash f'(\overline{n},\overline{m})$ and $\mathsf{R} \vdash \neg f'(\overline{k},\overline{m})$ for all $k < n$. Thus $\mathsf{R} \vdash f'(\overline{k},\overline{m}) \rightarrow \overline{n} = \overline{k}$ for all $k < n$, which yields $\mathsf{R} \vdash f'(\overline{k},\overline{m}) \rightarrow \overline{n} = \overline{k}$ for all $k \leq n$. Using $\mathsf{R4}$ yields ($\ast$)
		$\mathsf{R} \vdash \forall z \leq \overline{n}\, (f'(z,\overline{m}) \rightarrow \overline{n} = z)$. Thus $\mathsf{R} \vdash f(\overline{n},\overline{m})$. Since $\mathsf{R} \vdash \neg f'(\overline{n},\overline{m}) \rightarrow \neg f(\overline{n},\overline{m})$ and $\mathsf{R}$ is consistent, $f(x,y)$ also binumerates $\alpha$.
		
		Let $m \in \alpha(B)$. Let $n \in \mathbb{N}$ be such that ($\ast \ast$) $\mathsf{R} \vdash f(\overline{n},\overline{m})$. In order to show that $f(x,y)$ is an inj-binumeration it suffices to show that $\mathsf{R} \vdash \forall x ( f(x,\overline{m}) \rightarrow x = \overline{n})$, which we prove in the following derivation in $\mathsf{R}$:
		\begin{nbeweis}
			\bwz{f'(\overline{n},\overline{m})}{$(\ast \ast)$}\\
			\bwz{f(x,\overline{m}) \rightarrow f'(x,\overline{m}) \wedge \forall z \leq x\, ( f'(z,\overline{m}) \rightarrow x =z )}{Definition of $f(x,y)$}\\
			\bwz{\overline{n} \leq x \vee x \leq \overline{n}}{$\mathsf{R5}$}\\
			\bwz{\overline{n} \leq x \wedge \forall z \leq x\, ( f'(z,\overline{m}) \rightarrow x =z ) \rightarrow x = \overline{n}}{(1) and f.o.\ logic}\\
			\bwz{x \leq \overline{n} \wedge f'(x,\overline{m}) \wedge \forall z \leq \overline{n}\, (f'(z,\overline{m}) \rightarrow \overline{n} = z) \rightarrow x = \overline{n}}{f.o. logic}\\
			\bwz{\forall z \leq \overline{n}\, (f'(z,\overline{m}) \rightarrow \overline{n} = z)}{($\ast$)}\\
			\bwz{f'(x,\overline{m}) \wedge \forall z \leq x\, ( f'(z,\overline{m}) \rightarrow x =z ) \rightarrow x = \overline{n}}{(3)-(6)}\\
			\bwz{\forall x ( f(x,\overline{m}) \rightarrow x = \overline{n})}{(2),(7) and f.o.logic}
		\end{nbeweis}
	\end{proof}
	
	\begin{lem}
		\label{lemma:findenum:2}
		Let $B \subseteq \mathbb{N}$ and $\alpha \colon B \to \mathbb{N}$ be an injective recursive function. Let $f(x,y)$ be an inj-binumeration of $\alpha$ and let $\phi(x) \in \mathit{Fml}$. Then there exists ${\psi(y) \in \mathit{Fml}}$ such that $\mathsf{R} \vdash f(\overline{n},\overline{m}) \rightarrow (  \phi(\overline{n}) \leftrightarrow \psi(\overline{m}))$, for all $n \in \mathbb{N}$ and $m \in \alpha(B)$.
		
		If moreover $T$ is $\Sigma^0_k$-sound for some $k \geq 1$ such that $f(x,y) \in \Sigma^0_k$ and $\phi(x)$ is a $\Sigma^0_1$-numeration of $C \subseteq B$ in $T$, then $\psi(y)$ numerates $\alpha(C)$ in $T$.
	\end{lem}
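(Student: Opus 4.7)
The natural candidate is $\psi(y) \equiv \exists x\,(f(x,y) \wedge \phi(x))$, which simply ``pulls'' $\phi$ across the binumeration. The idea is that under the hypothesis $f(\overline{n},\overline{m})$, the formula $\psi(\overline{m})$ should reduce provably to $\phi(\overline{n})$, because $f$ is injective at $\overline{m}$ so the only witness $x$ for $\psi(\overline{m})$ can be $\overline{n}$ itself.

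For the main claim I would fix $n \in \mathbb{N}$ and $m \in \gamma(A)$ and reason inside $\mathsf{R}$ under the assumption $f(\overline{n},\overline{m})$. The implication $\phi(\overline{n}) \rightarrow \psi(\overline{m})$ is immediate by existential introduction with $x := \overline{n}$ together with the assumption. The converse $\psi(\overline{m}) \rightarrow \phi(\overline{n})$ is the key step: from an arbitrary witness $x$ satisfying $f(x,\overline{m}) \wedge \phi(x)$, I invoke the defining clause of an inj-binumeration, namely $\mathsf{R} \vdash \forall x,y\,(f(x,\overline{m}) \wedge f(y,\overline{m}) \rightarrow x = y)$ (applicable because $m \in \gamma(A)$), instantiated at $x$ and $\overline{n}$; combined with the standing hypothesis $f(\overline{n},\overline{m})$ this yields $x = \overline{n}$, and then $\phi(\overline{n})$ follows from $\phi(x)$ by substitutivity of equality. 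Discharging the existential witness gives the desired implication, and so we obtain $\mathsf{R} \vdash f(\overline{n},\overline{m}) \rightarrow (\phi(\overline{n}) \leftrightarrow \psi(\overline{m}))$.

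For the supplementary claim, assume $\phi(x)$ is a $\Sigma^0_1$-numeration of $G \subseteq A$, and let $m \in \gamma(G)$. Then there is a unique $n \in G$ with $\gamma(n) = m$; since $f$ binumerates $\gamma$ we have $\mathsf{R} \vdash f(\overline{n},\overline{m})$, and since $\phi$ numerates $G$ we have $\mathsf{R} \vdash \phi(\overline{n})$. Applying the biconditional just established yields $\mathsf{R} \vdash \psi(\overline{m})$, so $\psi(y)$ numerates $\gamma(G)$.

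The only step where something genuinely has to happen is the backward direction of the biconditional, and this is exactly where the extra strength of inj-binumeration (over plain binumeration) is used; the other steps are formal manipulations. I do not expect any serious obstacle, since the inj-property was precisely engineered by Lemma~\ref{lemma:existreprasinjective} to make this kind of transfer go through.
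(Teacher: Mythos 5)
Your choice of $\psi(y) \equiv \exists x\,(f(x,y) \wedge \phi(x))$ and your argument for the main biconditional coincide with the paper's: the forward implication is existential introduction, and the backward implication uses exactly the uniqueness clause of the inj-binumeration at $\overline{m}$ (legitimate since $m \in \gamma(A)$) together with substitutivity of equality. That part is fine.

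There is, however, a genuine gap in your treatment of the \enquote{furthermore} clause. To show that $\psi(y)$ \emph{numerates} $\gamma(G)$ you must establish an equivalence: $m \in \gamma(G)$ if and only if $\mathsf{R} \vdash \psi(\overline{m})$. You prove only the left-to-right direction, which is the easy one (the paper dismisses it as \enquote{immediate}). The converse --- if $\mathsf{R} \vdash \psi(\overline{m})$ then $m \in \gamma(G)$ --- is missing, and it is precisely here that the hypothesis that $\phi$ is a $\Sigma^0_1$-numeration does its work; the fact that your argument never invokes $\Sigma^0_1$ anywhere is the telltale sign. The missing argument runs: if $\mathsf{R} \vdash \exists z\,(f(z,\overline{m}) \wedge \phi(z))$, then by soundness of $\mathsf{R}$ there is an $n$ with $\mathbb{N} \models f(\overline{n},\overline{m}) \wedge \phi(\overline{n})$; since a binumeration of $\gamma$ also defines $\gamma$ over $\mathbb{N}$, we get $\gamma(n) = m$; since $\phi$ is $\Sigma^0_1$ and true at $n$, $\Sigma^0_1$-completeness gives $\mathsf{R} \vdash \phi(\overline{n})$, whence $n \in G$ because $\phi$ numerates $G$; therefore $m = \gamma(n) \in \gamma(G)$. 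Without this direction the conclusion \enquote{$\psi(y)$ numerates $\gamma(G)$} is not established --- and this direction is needed downstream, e.g.\ in Lemma~\ref{lemma:derivabilitycond} and Theorem~\ref{theorem:last}, where one must know that the transferred provability predicate does not numerate a proper superset of $\beta(T^{\vdash})$.
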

	
	\begin{proof}
		Define $\psi(y) \equiv \exists z \left( f(z,y) \wedge \phi(z )\right)$. Using the fact that $f(x,y)$ is an inj-binumeration of $\alpha$, it is straightforward to show that ${\mathsf{R} \vdash f(\overline{n},\overline{m}) \rightarrow (  \phi(\overline{n}) \leftrightarrow \psi(\overline{m}))}$.
		
		Assume now that $T$ is $\Sigma^0_k$-sound for some $k \geq 1$ and $f(x,y) \in \Sigma^0_k$.
		Let $\phi(x)$ be a $\Sigma^0_1$-numeration of $C \subseteq B$ in $T$. In order to show that $\psi(y)$ numerates $\alpha(C)$ in $T$, let $m \in \mathbb{N}$ be such that $T \vdash \psi( \overline{m})$, i.e., $T \vdash \exists z \left(  f(z,\overline{m}) \wedge \phi(z )\right)$ by definition of $\psi$. Since  $\exists z \left(  f(z,\overline{m}) \wedge \phi(z )\right) \in \Sigma^0_k$, there exists $n \in \mathbb{N}$ such that $\mathbb{N} \models f(\overline{n},\overline{m}) \wedge \phi(\overline{n} )$ by the $\Sigma^0_k$-soundness of $T$. Since $f(x,y)$ is a binumeration of $\alpha$ it also defines $\alpha$, hence $\alpha (n)=m$. Since $\phi$ is a $\Sigma^0_1$-numeration of $C$ in $T$ we get $n \in C$ by $\Sigma^0_1$-completeness of $T$. Thus $m = \alpha(n) \in \alpha(C)$ obtains. The proof of the other direction is immediate.
	\end{proof}
	
	In addition to proving the invariance of Gödel's Second Theorem, the above method can also be used to simplify proofs of metamathematical theorems. For instance, they provide a proof of the Diagonal Lemma which avoids the usual tedious process of arithmetisation (in particular of the numeral and the substitution functions).
	
	\begin{lem}
		\label{diaglemma}
		Let $T \supseteq \mathsf{R}$, let $\alpha$ be an admissible numbering and let $\phi(x) \in \mathit{Fml}$ with free variable $x$. Then there exists a sentence $\lambda$ such that $T \vdash \phi(\gn{\lambda}{\alpha}) \leftrightarrow \lambda$.
	\end{lem}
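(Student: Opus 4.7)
My plan is to reduce the claim to a version of the Diagonal Lemma for a \emph{reference} numbering and then transfer the fixed point via the translation between acceptable numberings, using the inj-binumeration machinery developed in Lemmas~\ref{lemma:existreprasinjective} and \ref{lemma:findenum:2}.

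First, I would fix any numbering $\beta$ of the given universe of arithmetical expressions for which a Diagonal Lemma over $T$ is already available---e.g., a standard numbering with the classical proof, or a numbering with ``implemented diagonalisation'' (as referenced earlier in the paper) where fixed points come essentially for free. Since both $\alpha$ and $\beta$ are acceptable, Theorem~\ref{theorem:numberinginvariance} yields $\alpha \sim \beta$; in particular the map $h := \beta \circ \alpha^{-1} \colon \alpha(E) \to \beta(E)$ is an injective recursive function. By Lemma~\ref{lemma:existreprasinjective} I would then pick an inj-binumeration $f(x,y)$ of $h$ in $\mathsf{R}$, so that $\mathsf{R} \vdash f(\overline{\alpha(\delta)}, \overline{\beta(\delta)})$ for every $\delta \in E$.

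Next, I would apply Lemma~\ref{lemma:findenum:2} to $h$, $f$ and the given formula $\phi(x)$, producing a formula $\psi(y)$ satisfying
\[
\mathsf{R} \vdash f(\overline{n},\overline{m}) \rightarrow (\phi(\overline{n}) \leftrightarrow \psi(\overline{m}))
\]
for all $n \in \mathbb{N}$ and $m \in \beta(E)$. Invoking the reference Diagonal Lemma relative to $\beta$ applied to $\psi(y)$ then produces a sentence $\gamma$ with $T \vdash \psi(\gn{\gamma}{\beta}) \leftrightarrow \gamma$. Since $h(\alpha(\gamma)) = \beta(\gamma)$, one has $\mathsf{R} \vdash f(\gn{\gamma}{\alpha}, \gn{\gamma}{\beta})$, so the defining property of $\psi$ yields $T \vdash \phi(\gn{\gamma}{\alpha}) \leftrightarrow \psi(\gn{\gamma}{\beta})$, and chaining the two equivalences delivers the desired $T \vdash \phi(\gn{\gamma}{\alpha}) \leftrightarrow \gamma$.

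The main obstacle is conceptual rather than computational: ensuring that the fixed point of $\psi$ at $\gn{\gamma}{\beta}$ transports, \emph{internally to $\mathsf{R}$}, to a fixed point of $\phi$ at $\gn{\gamma}{\alpha}$ for the \emph{same} sentence $\gamma$. The inj-binumeration $f$ together with Lemma~\ref{lemma:findenum:2} is precisely tailored for this task, which is why the argument bypasses any direct arithmetisation of the substitution or numeral functions relative to the possibly unfamiliar numbering $\alpha$---the entire computational burden is discharged by the recursive equivalence between acceptable numberings granted by Theorem~\ref{theorem:numberinginvariance}.
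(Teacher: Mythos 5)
Your proposal is correct and follows essentially the same route as the paper: translate $\phi$ along the recursive equivalence $\alpha \sim \beta$ (Theorem~\ref{theorem:numberinginvariance}) into a formula $\psi$ via Lemmas~\ref{lemma:existreprasinjective} and \ref{lemma:findenum:2}, obtain a fixed point of $\psi$ relative to the reference numbering, and transport it back inside $\mathsf{R}$. The only difference is that the paper commits to the specific reference numbering $\mathsf{gn}_1$ with implemented diagonalisation (so the fixed point is immediate and no arithmetisation of substitution is needed anywhere), whereas you leave the reference numbering open --- a harmless generalisation of the same argument.
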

	
	\begin{proof}
		Let $\hash$ be the numbering introduced in \citep[p. 159f.]{Visser1989}. It can be easily checked that $\hash$ is a computable simulation. Thus $\alpha \sim \hash$ by Theorem~\ref{theorem:numberinginvariance}. By Lemma~\ref{lemma:findenum:2} there exists a formula $\psi(y)$ such that for all $n \in \mathbb{N}$ and ${m \in \hash(A^\ast)}$ we have $(\ast)$ $\mathsf{R} \vdash f(\overline{n},\overline{m}) \rightarrow (  \phi(\overline{n}) \leftrightarrow \psi(\overline{m})),$ where $f$ binumerates $\hash \circ \alpha^{-1}$. By definition of $\hash$, for any given formula $\psi(y)$ there exists $k \in \mathbb{N}$ such that ${\hash( \psi(\overline{k}) ) = k}$ \citep[p. 159]{Visser1989}. Hence $\gn{\psi(\overline{k})}{\hash} \equiv \overline{k}$ and therefore $\vdash \psi(\gn{\psi(\overline{k})}{\hash} ) \leftrightarrow \psi(\overline{k})$. Setting $\lambda \equiv \psi(\overline{k})$, $(\ast)$ yields ${\mathsf{R} \vdash \phi(\gn{\lambda}{\alpha}) \leftrightarrow \psi(\gn{\lambda}{\hash})}$. Thus $\mathsf{R} \vdash \phi(\gn{\lambda}{\alpha}) \leftrightarrow \lambda$.
	\end{proof}
	
	Note that neither the definition of $\hash$ nor the proofs of Theorem~\ref{theorem:numberinginvariance} and Lemma~\ref{lemma:findenum:2} employ any arithmetisation of the usual syntactic properties and operations.
	
	\begin{cor}[Syntactic version of Tarski's Theorem]
		\label{syntacticTarski}
		Let $T \supseteq \mathsf{R}$ and let $\alpha$ be an admissible numbering. Then there exists no formula $\tau(x) \in \mathit{Fml}$ with free variable $x$ such that $T \vdash \phi \leftrightarrow \tau( \gn{\phi}{\alpha} )$ for every sentence $\phi$.
	\end{cor}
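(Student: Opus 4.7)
The plan is to derive this from the Diagonal Lemma (Lemma~\ref{diaglemma}) by the standard contradiction argument, now noting that the lemma is available for the arbitrary acceptable numbering~$\alpha$.

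First, I would assume for contradiction that some $\tau(x) \in Fml^1_{\mathcal{L}}$ satisfies $T \vdash \gamma \leftrightarrow \tau(\gn{\gamma}{\alpha})$ for every $\mathcal{L}$-sentence $\gamma$. Since $\neg \tau(x) \in Fml^1_{\mathcal{L}}$, I would apply Lemma~\ref{diaglemma} (which is already stated for the acceptable numbering $\alpha$) to obtain an $\mathcal{L}$-sentence $\delta$ such that
\[
T \vdash \neg \tau(\gn{\delta}{\alpha}) \leftrightarrow \delta.
\]

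Next, instantiating the assumed biconditional at $\gamma \equiv \delta$ yields $T \vdash \delta \leftrightarrow \tau(\gn{\delta}{\alpha})$. Combining the two biconditionals gives $T \vdash \tau(\gn{\delta}{\alpha}) \leftrightarrow \neg \tau(\gn{\delta}{\alpha})$, which is inconsistent; this contradicts the standing assumption that $T$ is consistent (recall from the technical preliminaries that the theories considered are consistent extensions of $\mathsf{R}$). Hence no such $\tau$ exists.

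There is essentially no obstacle here, since the heavy lifting has already been done: the numbering-invariant Diagonal Lemma~\ref{diaglemma} has been established for every acceptable $\alpha$ (via $\alpha \sim \mathsf{gn}_1$ and Lemma~\ref{lemma:findenum:2}), so the classical diagonal argument for Tarski's theorem goes through verbatim with $\gn{\cdot}{\alpha}$ in place of a specific standard numbering. The only thing to note is that no further properties of $\alpha$ beyond those guaranteeing Lemma~\ref{diaglemma} are needed, so the corollary follows immediately.
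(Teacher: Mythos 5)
Your argument is correct and is exactly the intended one: the paper states this result as a corollary immediately after the numbering-invariant Diagonal Lemma~\ref{diaglemma} and leaves the proof implicit, precisely because the standard diagonal argument (apply Lemma~\ref{diaglemma} to $\neg\tau(x)$, combine the two biconditionals, contradict the consistency of $T$) goes through verbatim. No discrepancy with the paper's approach.
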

	
	Using Lemma~\ref{diaglemma} and the usual schematic \enquote{modal reasoning} we can also prove the invariance of Löb's Theorem as well as its formalised version regarding admissible numberings. In particular, we can prove the desired
	
	\begin{thm}[Invariance of Gödel's Second Theorem]
		\label{maintheorem:invariance}
		$T \not\vdash \neg \mathsf{Pr}^{\alpha}(\gn{\bot}{\alpha})$, for all admissible numberings $\alpha$ and arithmetical formul{\ae} $\mathsf{Pr}^{\alpha}(x)$ satisfying \textnormal{Löb($T, \alpha$)}.
	\end{thm}
	
	The reader might observe that the formulation of Theorem~\ref{maintheorem:invariance} already presupposes the employed provability predicates to satisfy Löb's conditions. Since these conditions allow for the schematic \enquote{modal reasoning} sufficient to prove Gödel's Second Theorem (or more generally, Löb's Theorem), the problem of the invariance of Gödel's Second Theorem with regard to numberings was essentially reduced to establishing an invariant version of the Diagonal lemma (Lemma~\ref{diaglemma}).
	
	However, no information about the \textit{existence} of provability predicates satisfying Löb's conditions with regard to varying numberings is provided by Theorem~\ref{maintheorem:invariance}. As a result, the reader might wonder how much Theorem~\ref{maintheorem:invariance} improves on the classical result by Hilbert, Bernays and Löb (Theorem~\ref{standardGodel2}). For it could be the case that only the typically employed, standard numberings yield non-trivial\footnote{Note that also the formula $x=x$ satisfies Löb's conditions.} provability predicates satisfying Löb's conditions, rendering Theorem~\ref{maintheorem:invariance} a trivial extension of the standard Theorem~\ref{standardGodel2} (at least from an extensional perspective).
	
	The next theorem shows that in fact every admissible numbering $\alpha$ allows for the construction of a non-trivial provability predicate satisfying Löb's conditions relative to $\alpha$. This establishes Theorem~\ref{maintheorem:invariance} as a proper extension of the classical results based on a specific numbering.
	
	Let $\Omega_1 \equiv \forall x,y\, \exists z\, {\sf o}(x,y,z)$ where $\mathsf{o}(x,y,z)$ is a standard binumeration of (the graph of) the polynomially growing function $\omega_1(x,y) = x^{\log y}$, where $\log x = \max \{y \mid 2^y \leq x \}$. Let $\mathsf{Exp} \equiv \forall x,y\, \exists z\, {\sf e}(x,y,z)$, where ${\sf e}(x,y,z)$ is a standard binumeration of (the graph of) the exponentiation function given by $(x,y) \mapsto x^y$.\footnote{The importance of employing \textit{intensionally correct} binumerations can be illustrated by the following example. Set $\mathsf{o'}(x,y,z) \equiv \mathsf{o}(x,y,z) \wedge \forall p < x + y \, \neg \mathsf{Proof}_{\mathsf{ZF}}(p,\gn{\bot}{})$, where $\mathsf{o}(x,y,z)$ is a standard binumeration of $\omega_1$ and $\mathsf{Proof}_{\mathsf{ZF}}$ is a standard $\mathsf{ZF}$-proof predicate. Clearly, $\mathsf{o'}(x,y,z)$ binumerates $\omega_1$. However, the totality claim $\Omega_1$ based on $\mathsf{o'}(x,y,z)$ implies the $\mathsf{ZF}$-consistency sentence $\neg \mathsf{Pr}_{\mathsf{ZF}}(\gn{\bot}{})$.}
	
	\begin{thm}
		\label{theorem:last}
		For all admissible numberings $\alpha$ and $\Sigma^0_2$-sound theories $T$ extending $\mathsf{I \Delta_0 + \Omega_1}$ or $\mathsf{I \Delta_0 + Exp}$, there exists a formula $\mathsf{Pr}_{T}^{\alpha}(x)$ satisfying \textnormal{Löb($T, \alpha$)} which numerates $\{ \alpha(\phi) \mid T \vdash \phi  \}$ in $T$.
	\end{thm}
	
	Before proving the theorem we show a useful auxiliary lemma.
	
	\begin{lem}
		\label{lemma:derivabilitycond}
		Let $\alpha$ and $\beta$ be equivalent numberings. Then there exists a binumeration $f(x,y)$ of $\beta \circ \alpha^{-1}$ such that for each $\mathsf{Pr^{\alpha}}(x)$ satisfying \textnormal{Löb($T, \alpha$)} there exists $\mathsf{Pr^{\beta}}(x)$  satisfying \textnormal{Löb($T, \beta$)}, such that for all $n \in \mathbb{N}$ and $m \in \beta(A^\ast)$
		\[
		\mathsf{R} \vdash f(\overline{n},\overline{m}) \rightarrow ( \mathsf{Pr^{\alpha}}(\overline{n}) \leftrightarrow \mathsf{Pr^{\beta}}(\overline{m})  ).	
		\]
		If moreover $T$ is $\Sigma^0_2$-sound and $\mathsf{Pr^{\alpha}}(x)$ is a $\Sigma^0_1$-numeration of $\alpha(T^{\vdash} )$ in $T$, then $\mathsf{Pr^{\beta}}(x)$ numerates $\beta(T^{\vdash} ) $ in $T$.
	\end{lem}
	
	\begin{proof}	By Lemma~\ref{lemma:existreprasinjective} \& \ref{lemma:findenum:2} there exists a binumeration $f(x,y)$ of $\beta \circ \alpha^{-1}$ and a formula $\mathsf{Pr^{\beta}}(x)$ such that
		\begin{equation}
		\label{equ1:existenceproof}
		\mathsf{R} \vdash f(\overline{n},\overline{m}) \rightarrow ( \mathsf{Pr^{\alpha}}(\overline{n}) \leftrightarrow \mathsf{Pr^{\beta}}(\overline{m})  ),
		\end{equation}
		for all $n \in \mathbb{N}$ and $m \in \beta(A^\ast)$. We show now that $\mathsf{Pr^{\beta}}(x)$ satisfies \textnormal{Löb($T, \beta$)}:	
		\begin{enumerate}[label=\roman*.]
			\item If $T \vdash \phi$, then $T \vdash \mathsf{Pr^{\alpha}}(\gn{\phi}{\alpha})$, since $\mathsf{Pr^{\alpha}}(x)$ satisfies Löb1$^\alpha$. We have
			\[
			\mathsf{R} \vdash f(\gn{ \phi}{\alpha},\gn{ \phi}{\beta}),
			\]
			since $f(x,y)$ binumerates $\beta \circ \alpha^{-1}$. It then follows from \ref{equ1:existenceproof} that
			\[
			\mathsf{R} \vdash  \mathsf{Pr^{\alpha}}(\gn{ \phi}{\alpha}) \leftrightarrow \mathsf{Pr^{\beta}}(\gn{ \phi }{\beta}).
			\]
			Thus $T \vdash \mathsf{Pr^{\beta}}(\gn{\phi}{\beta})$.
			\item
			Let $\phi, \psi \in \mathit{Fml}$. Since $f(x,y)$ binumerates $\beta \circ \alpha^{-1}$ we have
			\[
			\mathsf{R} \vdash f(\gn{ \phi \rightarrow \psi}{\alpha},\gn{ \phi \rightarrow \psi}{\beta}).
			\]
			Using \ref{equ1:existenceproof} we therefore get
			\begin{equation}
			\label{equ2:existenceproof}
			\mathsf{R} \vdash  \mathsf{Pr^{\alpha}}(\gn{ \phi \rightarrow \psi}{\alpha}) \leftrightarrow \mathsf{Pr^{\beta}}(\gn{ \phi \rightarrow \psi}{\beta}).
			\end{equation}
			Similarly, we show
			\begin{equation}
			\label{equ3:existenceproof}
			\mathsf{R} \vdash  \mathsf{Pr^{\alpha}}(\gn{ \phi}{\alpha}) \leftrightarrow \mathsf{Pr^{\beta}}(\gn{ \phi}{\beta})
			\end{equation}
			and
			\begin{equation}
			\label{equ4:existenceproof}
			\mathsf{R} \vdash  \mathsf{Pr^{\alpha}}(\gn{ \psi}{\alpha}) \leftrightarrow \mathsf{Pr^{\beta}}(\gn{ \psi}{\beta}).
			\end{equation}
			Since $\mathsf{Pr^{\alpha}}(x)$ satisfies Löb2$^\alpha$ and using \ref{equ2:existenceproof}-\ref{equ4:existenceproof} we conclude that
			\[
			T \vdash \mathsf{Pr^{\beta}}(\gn{ \phi \rightarrow \psi}{\beta}) \rightarrow (\mathsf{Pr^{\beta}}(\gn{ \phi}{\beta}) \rightarrow \mathsf{Pr^{\beta}}(\gn{  \psi}{\beta})).
			\]
			\item
			Let $\phi \in \mathit{Fml}$. As in (ii) we show
			\begin{equation}
			\label{equ5:existenceproof}
			\mathsf{R} \vdash  \mathsf{Pr^{\alpha}}(\gn{ \mathsf{Pr^{\alpha}}(\gn{ \phi}{\alpha})}{\alpha}) \leftrightarrow \mathsf{Pr^{\beta}}(\gn{ \mathsf{Pr^{\alpha}}(\gn{ \phi}{\alpha})}{\beta})
			\end{equation}
			
			Since $\mathsf{Pr^{\alpha}}(x)$ satisfies Löb3$^\alpha$ and using \ref{equ3:existenceproof} \& \ref{equ5:existenceproof} we get
			\begin{equation}
			\label{equ6:existenceproof}
			T \vdash \mathsf{Pr^{\beta}}(\gn{ \phi}{\beta}) \rightarrow \mathsf{Pr^{\beta}}(\gn{ \mathsf{Pr^{\alpha}}(\gn{ \phi}{\alpha})}{\beta})
			\end{equation}
			Application of Löb1$^\beta$ to \ref{equ3:existenceproof} then yields
			\begin{equation*}
			T \vdash \mathsf{Pr^{\beta}}(\gn{\mathsf{Pr^{\alpha}}(\gn{ \phi}{\alpha}) \rightarrow \mathsf{Pr^{\beta}}(\gn{ \phi}{\beta})}{\beta}).
			\end{equation*}
			Applying Löb2$^\beta$ then yields
			\begin{equation*}
			T \vdash \mathsf{Pr^{\beta}}(\gn{\mathsf{Pr^{\alpha}}(\gn{ \phi}{\alpha})}{\beta})  \rightarrow \mathsf{Pr^{\beta}}(\gn{\mathsf{Pr^{\beta}}(\gn{ \phi}{\beta})}{\beta}).
			\end{equation*}
			In combination with \ref{equ6:existenceproof} we thus obtain
			\[
			T \vdash \mathsf{Pr^{\beta}}(\gn{ \phi}{\beta}) \rightarrow \mathsf{Pr^{\beta}}(\gn{ \mathsf{Pr^{\beta}}(\gn{ \phi}{\beta})}{\beta}).
			\]
		\end{enumerate}
		Assume now that $T$ is $\Sigma^0_2$-sound and that $\mathsf{Pr^{\alpha}}(x)$ is a $\Sigma^0_1$-numeration of $\alpha(T^{\vdash} )$ in~$T$. Since $\alpha(A^\ast)$ is decidable, inspection of the proof of Lemma~\ref{lemma:existreprasinjective} shows that we can choose $f(x,y) \in \Delta^0_2$. Then $\mathsf{Pr^{\beta}}(x)$ numerates $\beta(T^{\vdash} )$ in~$T$ by Lemma~\ref{lemma:findenum:2}.
	\end{proof}
	
	\begin{proof}[Proof of Theorem~\ref{theorem:last}]
		Let $\gamma$ be a standard numbering and let $\mathsf{Pr}^{\gamma}(x)$ a standard $\Sigma^0_1$-provability predicate satisfying \textnormal{Löb($T, \gamma$)} and numerating $\gamma(T^\vdash)$ in $T$ (see for instance \citep{Wilkie1987} if $T \supseteq \mathsf{I \Delta_0 + \Omega_1}$ or \citep[Chapter 7]{Rautenberg} if $T \supseteq \mathsf{I \Delta_0 + Exp}$). By Theorem~\ref{theorem:numberinginvariance}, $\alpha \sim \gamma$. Lemma~\ref{lemma:derivabilitycond} then yields a formula $\mathsf{Pr}^{\alpha}(x)$ satisfying \textnormal{Löb($T, \alpha$)}, numerating $\alpha(T^\vdash)$ in~$T$.
	\end{proof}
	
	The above lemma also provides a different proof of Theorem~\ref{maintheorem:invariance}.
	
	\begin{proof}[Alternative proof of Theorem~\ref{maintheorem:invariance}]
		Since $\gamma$ is a computable simulation we have $\alpha \sim \gamma$, by Theorem~\ref{theorem:numberinginvariance}. Applying Lemma~\ref{lemma:derivabilitycond} yields a binumeration $f(x,y)$ of $\gamma \circ \alpha^{-1}$ and a formula $\mathsf{Pr}^{\gamma}(x)$ satisfying \textnormal{Löb($T, \gamma$)}, such that
		\[
		\mathsf{R} \vdash \neg \mathsf{Pr}^{\alpha}(\gn{\bot}{\alpha}) \leftrightarrow \neg \mathsf{Pr}^{\gamma}(\gn{\bot}{\gamma}).
		\]
		Since $\gamma$ is standard, $T \not\vdash \neg \mathsf{Pr}^{\gamma}(\gn{\bot}{\gamma})$. Thus $T \not\vdash \neg \mathsf{Pr}^{\alpha}(\gn{\bot}{\alpha})$.
	\end{proof}
	
	\subsection{A Note on Intensionality}
	\label{subsection:intensionality}
	
	Despite its satisfaction of Löb's conditions as well as its numeration of $T$-theorems, I believe that even if $\mathsf{Pr}^{\gamma}(x)$ is constructed canonically, the provability predicate $\mathsf{Pr}^{\alpha}(x)$ obtained in Theorem~\ref{theorem:last} hardly qualifies as being \textit{intensionally correct}. According to the proof of Lemma~\ref{lemma:findenum:2}, $\mathsf{Pr}^{\alpha}(x)$ is of the form $\exists z \left( f(z,x) \wedge \mathsf{Pr}^{\gamma}(z) \right)$, with $f$ being a binumeration of ${\alpha \circ \gamma^{-1}}$. The intensional correctness of $\mathsf{Pr}^{\alpha}(x)$ therefore presumably fails, if made precise by a version of the resemblance criterion (see \citep[p.~676]{HalbachVisser1} and footnote~\ref{ftn:meaningpost}). If intensional correctness is understood by recourse to meaning postulates (see \citep[p.~676]{HalbachVisser1}), it should be noted that computable equivalence is too weak to preserve proof-theoretic properties which are not merely schematic.
	
	For instance, any intensionally correct provability predicate might be taken to satisfy global versions of Löb's conditions (see for instance \textbf{D2$^\textup{G}$} and \textbf{D3$^\textup{G}$} in \cite[Section 2.3]{Kurahashi2019}), as opposed to the local (i.e., schematic) conditions employed in this paper. While the invariance of Gödel's Second Theorem with regard to these conditions remains unaffected, it can be shown that there is a computable simulation $\alpha$ such that the provability predicate $\mathsf{Pr}^{\alpha}(x)$ obtained in Theorem~\ref{theorem:last} does not satisfy these global conditions.
	
	Furthermore, computability is in general too weak to enforce an intensionally correct arithmetisation of even basic syntactic operations. Before I illustrate this point for $T = \mathsf{PA}$, the reader is reminded that $A^\ast$ denotes the set of strings over the alphabet $A$ of our language $\mathcal{L}$. The operation $\underline{\neg}$ maps each well-formed formula $\phi \in A^\ast$ to its negation $(\neg \phi)$. We may reasonably require that every intensionally correct arithmetisation of syntax is based on a numbering such that the tracking functions of $\underline{\neg}$ and the other constructor operations introduced in Section~\ref{subsection:prelimin} are \textit{provably total} in $\mathsf{PA}$ (see also \citep[p.~33]{Halbach2014}).
	
	\begin{definition}
		Let $T$ be an $\mathcal{L}$-theory and let $f \colon \mathbb{N}^k \to \mathbb{N}$ be a function. We say that $f$ is $\Sigma_n$-definable in $T$, if there exists a $\Sigma_n$-formula $\phi(x_1, \ldots, x_k,y)$ which defines the graph of $f$ such that $T \vdash \forall x_1, \ldots, x_k \exists ! y\, \phi(x_1, \ldots, x_k,y)$.
		
		The function $f$ is called provably total in $T$ if $f$ is $\Sigma_1$-definable in $T$.
	\end{definition}
	
	We now construct a computable simulation $\beta$ of the set of strings $A^\ast$, such that the tracking function of $\underline{\neg}$ is recursive but \textit{not} provably total in $\mathsf{PA}$. Suppose that the symbols of the alphabet $A$ are ordered. We now effectively enumerate $A^\ast$ using the length-first ordering, according to which $s < t$, if the length of $s$ is smaller than the length of $t$, or if $s$ and $t$ have the same length, $s$ is lexicographically smaller than $t$. Let $\psi_i$ be the $i$-th string in this enumeration. Let $h \colon \mathbb{N} \to \mathbb{N}$ be a strictly increasing and recursive function which is not provably total in $\mathsf{PA}$, such that $h(\mathbb{N})$ is decidable, $0 \notin h(\mathbb{N})$ and $| h(\mathbb{N}) | = | \mathbb{N} \setminus h(\mathbb{N}) | = | \mathbb{N} | $.\footnote{An example of such a function is $F_{\varepsilon_0}$, the stage $\varepsilon_0$ of the fast growing hierarchy. In fact, the hierarchy can be defined such that $F_{\varepsilon_0}(\mathbb{N})$ is $\Delta^0_0(\mathsf{exp})$-definable, see \citep{Sommer1995} and \citep{Freund2017}.}
	
	We define $\beta \colon A^\ast \to \mathbb{N}$ recursively as follows. Let $\beta(\psi_0) = 0$. Suppose now that $\beta(\psi_j)$ is defined for all $j < i$, then
	\begin{equation*}
	\beta(\psi_i) =
	\begin{cases}
	h(\beta(\psi_j))  & \text{if } \psi_i \equiv (\neg \psi_j), \\
	\min \{ n \mid \Forall j < i \ n \neq \beta(\psi_j) \Land n \notin h(\mathbb{N}) \} & \text{if } \Forall j \ \psi_i \not\equiv (\neg \psi_j).
	\end{cases}
	\end{equation*}
	
	The resulting numbering $\beta$ is bijective, and $\beta \sim \iota$, where $\iota \colon A^\ast \to \mathbb{N}$ is given by $\psi_i \mapsto i$. Since $\iota$ is a computable simulation (see \cite[chapter 9]{Smullyan1961} and Corollary~\ref{cor:compsimequivtosequencadmiss}), also $\beta$ computably simulates $A^\ast$, by Lemma~\ref{lemma:manin:invarianceofequivalentnumb}. However, the \mbox{$\beta$-tracking} function of $\underline{\neg}$ coincides with $h$ and is therefore not provably total in $\mathsf{PA}$. That is, there is no $\Sigma_1$-numeration $\mathsf{Neg}(x,y)$ of the graph of the function ${\beta(\phi) \mapsto \beta( (\neg \phi) )}$ such that $\mathsf{PA} \vdash \forall x\, \exists ! z\, {\sf Neg}(x,y)$. The definition of $\beta$ can be easily extended such that the $\beta$-tracking functions of all the operations $\underline{=}$, $\underline{\wedge}$, $\underline{\rightarrow}$, $\underline{\forall}$, etc.\ are recursive but not provably total in $\mathsf{PA}$. However, as already mentioned above, this might reasonably be taken as a minimal requirement for an intensionally correct arithmetisation when resorting to the meaning postulate approach.
	
	\section{Summary and Conclusions}
	\label{section:conclusion}
	
	The purpose of this paper has been to investigate the invariance of Gödel's Second Theorem regarding numberings. More precisely, I have examined the extent to which one can abstract away from the choice of the employed numbering in the formulation of Gödel's theorem which is based on Löb's conditions (see~Theorem~\ref{standardGodel2}).
	
	I have shown that \enquote{counterexamples} to several metamathematical theorems can be constructed, based on a na\"ive approach to numberings. For instance, there exists a numbering $\delta$ and a formula $\mathsf{Pr}^\delta(x)$ satisfying Löb's conditions (relative to~$\delta$) such that $\neg \mathsf{Pr}^\delta(\gn{\bot}{\delta})$ is a provable consistency sentence. Moreover, there is a numbering~$\eta$ and a formula $\mathsf{Tr}^\eta(x)$ defining the set of ($\eta$-codes of) true sentences, such that $\mathsf{PA} \vdash \mathsf{Tr}^\eta(\gn{\phi}{\eta}) \leftrightarrow \phi$, for all sentences $\phi$, violating both the semantic and syntactic version of Tarski's Theorem (see Section~\ref{subsection:deviant} and set $\mathsf{Tr}^\eta(x) := \mathsf{Pr}^\eta(x)$).
	
	These examples suggest that the invariance problem regarding numberings comes with some subtleties which are typically overlooked. I have argued that they do not however refute the usual philosophical interpretations of these theorems, since the employed numberings resort to resources exceeding the relevant theoretical framework, and are therefore not \textit{admissible} choices in the formalisation process. Since the relevant theoretical framework depends on the given metamathematical context, the admissibility of numberings has been analysed as a context-sensitive notion. In the context of Gödel's Second Theorem and the syntactic version of Tarski's Theorem for instance, the relevant theoretical framework consists of the considered formal theory~$T$. Accordingly, admissible numberings are required to allow for $T$ to verify, i.e., prove, certain basic properties of strings via their codes. Since the relevant theoretical framework in the context of the semantic version of Tarski's Theorem consists of the considered language $\mathcal{L}$ rather than a theory, admissible numberings in this context are required to allow for $\mathcal{L}$ to express, i.e., define, certain properties of strings via their codes. By resorting to a minimal set of properties of strings, every admissible numbering in the context of Gödel's Second Theorem has been shown to be a \textit{computable simulation}, while in the context of the semantic version of Tarski's Theorem every admissible numbering  is an \textit{arithmetical simulation} (see Section~\ref{subsection:epistemicrole}). It should be noted that admissible numberings are not required to be monotonic (a detailed discussion of monotonicity is contained in Section~\ref{subsection:otherroutes}).
	
	Finally, I have formulated and proven (meta-)mathematical invariance claims relative to these two precise necessary conditions of admissibility (see Section 4), namely the invariance of Gödel's Second Theorem and the syntactic version of Tarski's Theorem regarding computable simulations (see Theorem~\ref{theorem:numberinginvariance} and Corollary~\ref{syntacticTarski}) as well as the invariance of the semantic version of Tarski's Theorem regarding arithmetical simulations (see Theorem~\ref{semanticTarski} and footnote~\ref{ftn:semanticTarski}). Thus, once admissible numberings are singled out, invariance prevails and these theorems can be seen to be independent of the choice regarding their underlying numbering.
	
	I conclude by pointing towards potential future research avenues. It should be noted that the notion of admissibility used in this paper is too weak to ensure the invariance of metamathematical theorems which do not only resort to schematic quantification. For instance, the parametric version of the Diagonal Lemma (see \cite[Theorem III.2.1]{Hajek1998}) or the free-variable version of Löb's Theorem (see \citep[Theorem 4.1.7]{Smorynski1977}) appears to be outside the scope of this paper's methods. While Lemma~\ref{lemma:findenum:2} can be converted into a uniform version such that $\mathsf{R} + \text{\enquote{$\leq$ is a linear ordering}} \vdash \forall x, y \, (f(x,y) \rightarrow (  \phi(x) \leftrightarrow \psi(y)))$, the translation function binumerated by $f$ is not necessarily $T$-provably total. This is however a necessary ingredient for proving the invariance of these theorems along the lines of our proof of Lemma~\ref{diaglemma}. These remarks suggest that invariance results of such theorems require a more restrictive notion of admissibility, allowing further properties of the resulting translation functions to be verified within $T$. A philosophically adequate account of admissibility of numberings \textit{relative to a theory} is thus called for (cf.~Section~\ref{subsection:epistemicrole}).
	
	This observation is closely related to another direction of research into which the study of invariance may be extended. For instance, in addition to Gödel's Second Theorem itself being invariant, one might also require its proof and the process of arithmetisation to be independent of the employed numbering. This would rule out intensionally incorrect provability predicates such as those present in the proof of Theorem~\ref{theorem:last}, as well as non-standard proofs based on non-standard numberings (cf.~Lemma~\ref{diaglemma}). As noted in Section~\ref{subsection:intensionality}, an invariance claim along these lines requires translation functions of numberings to preserve stronger than merely schematic proof-theoretic properties. Since computable equivalence is too weak for this purpose, once again a more restrictive and theory-dependent notion of admissibility of numberings is needed. I believe that future work in this direction may further elucidate to what extent the choice of numberings bears on intensionality phenomena in metamathematics.
	
	\section{Appendix: Constructions of Deviant Numberings}
	\label{appendix}
	
	\paragraph{The Numbering $\bm{\delta}$.}
	For technical convenience we add the constant symbol $\mathsf{1}$ to~$\mathcal{L}$. The reader is reminded that the set of expressions is given by $A^\ast$ (with $\mathsf{1} \in A$). Let $T \supseteq \mathsf{PA}^-$ be any fixed consistent theory, where $\mathsf{PA}^-$ is the theory of the non-negative parts of discretely ordered rings (see \citep[p.\ 16f.]{Kaye}). We specify a numbering $\delta$ of $A^\ast$ such that the tracking functions of the operations $\underline{\mathsf{S}}$, $\underline{+}$, $\underline{\times}$, $\underline{\prime}$, $\underline{=}$, $\underline{\wedge}$ and $\underline{\forall}$ are recursive relative to $\delta$ and the set of ($\delta$-codes of) $T$-theorems is decidable. For technical convenience, we employ a derivability relation such that for every formula $\phi$ we have $T \vdash \phi$ iff $T \vdash \forall \phi$, where $\forall \phi$ denotes the universal closure of $\phi$.
	
	The basic idea is to construct $\delta$ such that all ($\delta$-codes of) $T$-theorems can be isolated from the remaining ($\delta$-codes of) strings of $A^\ast$ by the decidable property of parity. In order to do so, we first partition the even and odd numbers into infinite decidable sets, each corresponding to a certain syntactic category. Let ${\langle x,y \rangle \equiv \frac{(x+y+1)(x+y+2)}{2}+y}$ be the usual pairing function. We define the following functions:
	\begin{align*}
	& & \Theta^{\mathsf{junk}}(x,y) & = 12 \cdot \langle x,y \rangle +1,\\
	& & \Theta^{\mathsf{tm}}(x,y) & = 12 \cdot \langle x,y \rangle +3,\\
	\Lambda^{=}(x,y) & = 8 \cdot \langle x,y \rangle, & \Theta^{=}(x,y) & = 12 \cdot \langle x,y \rangle +5,\\
	\Lambda^{\neg}(x,y) & = 8 \cdot \langle x,y \rangle +2, & \Theta^{\neg}(x,y) & = 12 \cdot \langle x,y \rangle +7,\\
	\Lambda^{\wedge}(x,y) & = 8 \cdot \langle x,y \rangle + 4, & \Theta^{\wedge}(x,y) & = 12 \cdot \langle x,y \rangle +9,\\
	\Lambda^{\forall}(x,y) & = 8 \cdot \langle x,y \rangle +6, & \Theta^{\forall}(x,y) & = 12 \cdot \langle x,y \rangle +11.
	\end{align*}
	
	In what follows, we define $\delta$ such that all ($\delta$-codes of) terms are elements of $\Theta^{\mathsf{tm}} (\mathbb{N},\mathbb{N}) = \{ \Theta^{\mathsf{tm}} (m,n) \mid m,n \in \mathbb{N} \} = 12 \mathbb{N} + 3$, all ($\delta$-codes of) $T$-theorems of the form $(s = t)$ are elements of $\Lambda^{=} (\mathbb{N},\mathbb{N}) =  8 \mathbb{N}$, all ($\delta$-codes of) formul{\ae} of the form $(s = t)$ which are not $T$-theorems are elements of $\Theta^{=} (\mathbb{N},\mathbb{N})   = 12 \mathbb{N} +5$, all ($\delta$-codes of) of $T$-theorems of the form $(\phi \wedge \psi)$ are elements of $\Lambda^{\wedge} (\mathbb{N},\mathbb{N}) = 8 \mathbb{N} + 2$, etc. Moreover, all ($\delta$-codes of) junk-expressions, i.e., strings which are not well-formed expressions, are elements of $\Theta^{\mathsf{junk}} (\mathbb{N},\mathbb{N}) = 12 \mathbb{N} + 1$. Since
	\begin{align*}
	2 \mathbb{N} & = \Lambda^{=} (\mathbb{N},\mathbb{N}) \cup \Lambda^{\neg} (\mathbb{N},\mathbb{N}) \cup \Lambda^{\wedge} (\mathbb{N},\mathbb{N}) \cup \Lambda^{\forall} (\mathbb{N},\mathbb{N}) \text{ and}\\
	2 \mathbb{N}+1 & = \Theta^{\mathsf{junk}}(\mathbb{N},\mathbb{N}) \cup \Theta^{\mathsf{tm}}(\mathbb{N},\mathbb{N}) \cup \Theta^{=} (\mathbb{N},\mathbb{N}) \cup \Theta^{\neg} (\mathbb{N},\mathbb{N}) \cup \Theta^{\wedge} (\mathbb{N},\mathbb{N}) \cup \Theta^{\forall} (\mathbb{N},\mathbb{N})
	\end{align*}
	all $\delta$-codes of $T$-theorems are even, while the $\delta$-codes of all other expressions are odd. Thus $T \vdash \phi \text{ iff } \delta(\phi) \text{ is even}$, for every sentence $\phi$. Using $\Sigma^0_1$-completeness it is then easy to check that the $\Delta^0_0$-formula $\mathsf{Pr}(x) \equiv \exists y < x \ x = \overline{2} \times y$ satisfies \textnormal{Löb($T, \delta$)} and $T \vdash \neg \mathsf{Pr}(\gn{\bot}{\delta})$. However, $\mathsf{Pr}(x)$ will not define the set of $T$-theorems, since not every even number will code a $T$-theorem. We construct a $\Delta^0_0(\mathsf{exp})$-formula below which in addition to satisfying Löb's conditions also binumerates the set of $T$-theorems.
	
	In order to define the desired numbering $\delta$, we first construct numberings $\delta_i$ for every stage ${i < \omega}$. On the term level any standard numbering will do. In order to satisfy the aforementioned desired properties, we set $\delta_0(s) = \Theta^{\mathsf{tm}}(0, \gamma(s) )$, for each $s \in \mathit{Term}$, where $\gamma$ is Smith's (\citeyear{Smith}) standard numbering. The arithmetisation of the syntactic properties of and operations on terms proceeds as usual. In particular, the properties of being a ($\delta_0$-code of a) variable and being a ($\delta_0$-code of a) term are decidable. Moreover, the $\delta_0$-tracking functions of $\underline{\mathsf{S}}$, $\underline{+}$, $\underline{\times}$, $\underline{\prime}$ are recursive.
	
	In order to extend $\delta_0$ such that also $\underline{=}$ has a recursive tracking function, we first show that for any given expressions $s,t$, it is decidable whether or not $T \vdash s=t$ (with $s,t$ being possibly open terms).
	
	Since $T$ is $\Pi^0_1$-sound and proves the axioms of a commutative semiring, to each term $s(x_1,\ldots,x_l)$ there corresponds a unique polynomial $p_s \in \mathbb{N}[x_1,\ldots,x_l]$ such that $p_s = p_t$ iff $T \vdash s = t$. In order to show the decidability of $T \vdash s = t$, the rough idea is to define a rewriting system which effectively reduces each term $s(x_1,\ldots,x_l)$ to its unique normal form $p_s$. Since commutativity blocks the termination of such systems, we instead define a class-rewriting system which essentially operates on AC-congruence classes of terms, with $AC$ being the equational theory consisting of the associativity and commutativity axioms of both $+$ and $\cdot$. This gives rise to a system with the following properties:
	\begin{enumerate}[noitemsep]
		\item for each term there exists a unique normal form up to permutations under associativity and commutativity,
		\item each normal form $s'$ of $s$ is a term and $T \vdash s = s'$,
		\item normal forms of terms can be effectively computed,
		\item for any two $s,t$ and respective normal forms $s',t'$: $T \vdash s = t$ iff $s'  \sim_{AC} t'$.
	\end{enumerate}
	
	We first define the ordinary term rewriting system $R$, consisting of the following rules:
	\begin{enumerate}[noitemsep,label=\roman*.]
		\item $t \times (u + v) \rightarrow t \times u + t \times v$
		\item $\mathsf{S}t \rightarrow t + \mathsf{1}$
		\item $\mathsf{1} \times t \rightarrow t$
		\item $\mathsf{0} \times t \rightarrow \mathsf{0}$
		\item $t + \mathsf{0} \rightarrow t$
	\end{enumerate}
	
	In allowing the rewriting of a term by means of rewriting any $AC$-equivalent term, $R$ is thus extended to the class-rewriting system $R / AC$. In order to show that this system terminates, let $A = \{ n \geq 2 \mid n \in \mathbb{N} \}$ and consider the following polynomial weight functions:
	\begin{enumerate}[noitemsep,label=\alph*.]
		\item $\mathsf{0}_A = 2$
		\item $\mathsf{1}_A = 2$
		\item $\mathsf{S}_A = X + 4$
		\item $+_A = X + Y +1$
		\item $\times_A = XY$
	\end{enumerate}
	
	All polynomials (a)-(e) satisfy associativity and commutativity. Furthermore the polynomials $F_{l,r}$ (i.e., the result of subtracting the weight of the right hand side from the left hand side of a given rule) of the rules $\textnormal{(i)-(v)}$ are $X-1$, $1$, $X$, $2X-2$ and $3$ respectively, which are all strictly positive (over $A$). Thus $R / AC$ is terminating (see \cite[Chapter 6]{Terese}). Furthermore, $R$ is left-linear. The critical pairs of $R$ are of the form ${(u+v,\mathsf{1} \times u + \mathsf{1} \times v)}$, $(\mathsf{0},\mathsf{0} \times u + \mathsf{0} \times v)$ and $(t \times u, t \times u + t \times \mathsf{0})$. It can easily be checked that they converge, for instance, $\mathsf{1} \times u + \mathsf{1} \times v \rightarrow u + \mathsf{1} \times v \rightarrow u + v$. Hence the class-rewriting system $R / AC$ has the Church-Rosser property modulo $AC$ (see \cite[Chapter 7]{Dershowitz1991}).
	
	We may conclude that for each term, $R  / AC$ effectively computes a unique normal form up to permutations under associativity and commutativity. In order to decide whether or not $T \vdash s =t$ for two terms $s,t$, one proceeds as follows: first effectively rewrite $s$ and $t$ into their respective normal forms $s'$ and $t'$. Then check whether or not $s'  \sim_{AC} t'$. Since the word problem for $AC$ is decidable and $T \vdash s =t$ iff $s'  \sim_{AC} t'$, the decidability of $T \vdash s =t$ obtains. This decision process can be mimicked on the $\delta_0$-codes of terms in the usual way, yielding a corresponding decidable arithmetical property, which is binumerated by the formula $\mathsf{PrEqu}(x,y)$.\\
	
	In order to extend $\delta_0$ to a numbering of $A^\ast$ satisfying the desired properties, we exploit the fact that for any given expressions $\phi$, $\psi$ and $x$, whether or not $\phi \wedge \psi$ and $\forall x \phi$ are $T$-provable is already fully determined by the $T$-provability of $\phi$ and $\psi$ (and by the decidable fact of whether or not $x$ is a variable). Clearly this does not hold for $\neg$, as neither $T \vdash \neg \phi$ nor $T \not\vdash \neg \phi$ are entailed by $T \not\vdash \phi$. For this reason, $\underline{\neg}$ will not have a recursive $\delta$-tracking function.
	
	Let $\mathit{Junk}$ denote all strings of $A^\ast$ which are not well-formed expressions. Let $\{ \chi_n \mid n \in \mathbb{N} \}$ be an enumeration without repetitions of $T$-provable formul{\ae} which are of the form $(\neg \phi)$ and let $\{ \nu_n \mid n \in \mathbb{N} \}$ be an enumeration without repetitions of non-$T$-provable formul{\ae} of the form $(\neg \phi)$. Moreover, let $\{ \mu_n \mid n \in \mathbb{N} \}$ be an enumeration without repetitions of $\mathit{Junk}$.
	
	Set $\delta_0(\chi_n) = \Lambda^{\neg}(0,n) $, $\delta_0(\nu_n)= \Theta^{\neg}(0,n)$ and $\delta^{\mathsf{junk}}(\mu_n)= \Theta^{\mathsf{junk}}(0,n)$. We now successively extend the numbering $\delta_0$ of $\mathit{Term} \cup \{ (\neg \phi) \mid \phi \in \mathit{Fml} \}$ in $\omega$-many steps to a numbering of $A^\ast$, by defining a function $\delta_{i+1}$ for each $i < \omega$ as follows:
	{\small
		\begin{align*}
		\dom(\delta_{i+1}) & = 
		\dom(\delta_{i}) \cup \{ (s = t)  \mid s,t \in \dom(\delta_{i}) \cap \mathit{Term} \} \cup\\
		& \quad \cup \{ (\phi \wedge \psi), (\forall x \phi)  \mid \phi, \psi \in \dom(\delta_{i}) \cap \mathit{Fml} \Land x \in \dom(\delta_{i}) \cap \mathit{Var} \};\\
		\delta_{i+1}(\phi) & = \delta_i(\phi), \text{ if } \phi \in \dom(\delta_{i});\\
		\delta_{i+1}((s = t)) & =
		\begin{cases}
		\Lambda^{=}(\delta_i(s),\delta_i(t)), & \text{ if } s, t \in \dom(\delta_{i}) \cap \mathit{Term}, T \vdash (s = t),\\
		\Theta^{=}(\delta_i(s),\delta_i(t)), & \text{ if } s,t \in \dom(\delta_{i}) \cap \mathit{Term}, T \not\vdash (s = t);\\
		\end{cases}\\
		\delta_{i+1}((\phi \wedge \psi)) & =
		\begin{cases}
		\Lambda^{\wedge}(\delta_i(\phi),\delta_i(\psi)), & \text{ if } \phi, \psi \in \dom(\delta_{i}) \cap \mathit{Fml}, T \vdash \phi, \psi,\\
		\Theta^{\wedge}(\delta_i(\phi),\delta_i(\psi)), & \text{ if } \phi, \psi \in \dom(\delta_{i}) \cap \mathit{Fml}, T \not\vdash \phi \text{ or } T \not\vdash \psi;\\
		\end{cases}\\
		\delta_{i+1}((\forall x \phi)) & =
		\begin{cases}
		\Lambda^{\forall}(\delta_i(x),\delta_i(\phi)), & \text{ if } x \in \dom(\delta_{i}) \cap \mathit{Var}, \phi \in \dom(\delta_{i}) \cap \mathit{Fml},  T \vdash \psi,\\
		\Theta^{\forall}(\delta_i(x),\delta_i(\phi)), & \text{ if } x \in \dom(\delta_{i}) \cap \mathit{Var}, \phi \in \dom(\delta_{i}) \cap \mathit{Fml}, T \not\vdash \psi.
		\end{cases}
		\end{align*}
	}
	
	Finally, we set $\delta = \left( \bigcup_{i < \omega} \delta_i \right) \cup \delta^{\mathsf{junk}}$, with $\dom(\delta) = \left( \bigcup_{i < \omega} \dom(\delta_i) \right) \cup \mathit{Junk}$. By definition of the functions $\Theta^{\mathsf{junk}}$, $\Theta^{\mathsf{tm}}$, $\Lambda^{=}$, $\Theta^{=}$, etc., $\delta$ is an injective function and thus a numbering of $A^\ast$. In order to show that $\{ \delta(\phi) \mid T \vdash \phi \}$ can be binumerated by a $\Delta^0_0(\mathsf{exp})$-formula, we define
	\begin{align*}
	\mathsf{Pr}(x) \equiv & \exists s \ \mathsf{Seq}(s) \wedge x = (s)_{\mathsf{Lh}(s)} \wedge \forall i \leq \mathsf{Lh}(s) \big( \exists n \ (s)_i = \Lambda^{\neg}(0,n)\\
	& \vee \exists p,q \ \mathsf{Term}(p) \wedge \mathsf{Term}(q) \wedge \mathsf{PrEqu}(p,q) \wedge (s)_i = \Lambda^{=}(p,q)\\
	& \vee \exists j,k < i \ (s)_i = \Lambda^{\wedge}( (s)_j, (s)_k )\\
	& \vee \exists j < i, y \leq (s)_i \ \mathsf{Var}(y) \wedge (s)_i = \Lambda^{\forall}(y, (s)_j ) \big).
	\end{align*}
	Clearly, $\mathsf{Pr}(x)$ defines $\{ \delta(\phi) \mid T \vdash \phi \}$. As we have seen above, $\mathsf{PrEqu}(x,y)$ can be taken to be a $\Delta^0_0(\mathsf{exp})$-formula, as well as $\mathsf{Seq}(x), \mathsf{Term}(x)$ and $\mathsf{Var}(x)$. It remains to show that the variable $s$ can be bounded. To do so, construct a $\Delta^0_0(\mathsf{exp})$-definable height function $h(x)$ such that $h( \delta(\phi) ) = \# \{ \psi \mid \psi \text{ is a subformula of } \phi \}$. Then the length of the sequence (coded by) $s$ can be bounded by the number of subformul{\ae} of the theorem (coded by) $x$, i.e., by $h(x)$. Thus $\mathsf{Pr}(x)$ can be taken to be $\Delta^0_0(\mathsf{exp})$.
	
	In a similar way, it can be shown that $\{ \delta(\phi) \mid \phi \in \mathit{Fml} \}$ is $\Delta^0_0(\mathsf{exp})$-binumerable, by extending $\mathsf{Pr}(x)$ in each clause with the corresponding set $\Omega^\neg$, $\Omega^{=}$, $\Omega^{\wedge}$ or $\Omega^\forall$ of non-provable formul{\ae} respectively.
	
	We conclude by showing the operations $\underline{=}$, $\underline{\wedge}$ and $\underline{\forall}$ are recursive relative to $\delta$. As above, it can be shown that $\dom(\delta_i)$, for all $i < \omega$ as well as $\dom(\delta)$ is decidable. Moreover, the sets $\mathit{Var}$, $\mathit{Term}$, $\mathit{Fml}$ as well as $T^\vdash$ are decidable relative to $\delta$. 
	To then compute, for instance, the tracking function of $\underline{\wedge}$ given by $\oewedge \colon \delta(\mathit{Fml}) \times \delta(\mathit{Fml}) \to \mathbb{N}$, one first decides for a given input $(m,n)$ whether or not $\mathsf{Pr}(m)$ and $\mathsf{Pr}(n)$ are true. If both hold, then the output is $\Lambda^{\wedge}(m,n)$. If not, the output is $\Theta^{\wedge}(m,n)$. Hence, the tracking function $\underline{\wedge}$ is recursive. Showing the recursiveness of the remaining tracking functions proceeds in a similar way.
	
	\paragraph{Remark.}
	
	The deviant numbering $\delta$ is optimal in the following sense.
	
	\begin{lem}
		\label{lem:deltaisoptimal}
		There is no numbering $\delta'$ of $A^\ast$ such that the set of \textup($\delta'$-codes of\textup) \mbox{$T$-theorems} is decidable and \emph{all} the constructor operations introduced in Section~\ref{subsection:prelimin} are recursive relative to $\delta'$. 
	\end{lem}
	
	\begin{proof}[Proof sketch]
		Let $\delta'$ be a numbering of $A^\ast$ such that \emph{all} the constructor operations introduced in Section~\ref{subsection:prelimin} are recursive relative to $\delta'$. Let $\gamma$ be Smith's (\citeyear{Smith}) standard numbering. Clearly, all the constructor operations introduced in Section~\ref{subsection:prelimin} are also recursive relative to $\gamma$. By slightly generalising Mal'cev's theorem (see Theorem~\ref{theorem:numberinginvariance} and \citep[Section 4]{Malcev1961}), we can show that $\delta'$ and $\gamma$ are equivalent numberings of $\mathit{Term} \cup \mathit{Fml} \subset A^\ast$, i.e., of the well-formed $\mathcal{L}$-expressions. Hence, by Lemma~\ref{lemma:manin:invarianceofequivalentnumb}, the set of \textup($\delta'$-codes of\textup) $T$-theorems cannot be decidable.
	\end{proof} 
	
	In short: invariance is maintained as long as all constructor operations are required to be recursive. The construction of $\delta$ shows that deviance lurks as soon as we drop the requirement of recursiveness for $\underline{\neg}$. Similarly, deviant results can occur once the recursiveness requirement is given up for $\underline{\forall}$, as witnessed by the numbering $\eta$ constructed below. However, we cannot expect to find a deviant analogue to $\delta$ and $\eta$ if we relax the requirement of recursiveness for $\underline{\wedge}$.\footnote{I am grateful to an anonymous referee for suggesting this lemma to me.}
	
	\begin{lem}
		Let $T$ be $\Sigma_1$-sound. There is no numbering $\delta'$ of $A^\ast$ such that the set of \textup($\delta'$-codes of\textup) $T$-theorems is decidable and the constructor operations $\underline{\mathsf{S}}$, $\underline{+}$, $\underline{\times}$, $\underline{\prime}$, $\underline{=}$, $\underline{\neg}$ and $\underline{\forall}$ are recursive relative to $\delta'$.
	\end{lem}
	
	\begin{proof}[Proof sketch]
		Let $\delta'$ be a numbering of $A^\ast$ such that the set of \textup($\delta'$-codes of\textup) $T$-theorems is decidable and the operations $\underline{\mathsf{S}}$, $\underline{+}$, $\underline{\times}$, $\underline{\prime}$, $\underline{=}$, $\underline{\neg}$ and $\underline{\forall}$ are recursive relative to $\delta'$. Let $\mathit{Term}^- \cup \mathit{Fml}^- \subset A^\ast$ denote the conjunction-free fragement of~$\mathcal{L}$, i.e., the set of well-formed $\mathcal{L}$-expressions without conjunction. As in the proof of Lemma~\ref{lem:deltaisoptimal} we can show that Smith's $\gamma$ and $\delta'$ are equivalent numberings of $\mathit{Term}^- \cup \mathit{Fml}^-$. Hence, by Lemma~\ref{lemma:manin:invarianceofequivalentnumb}, $\{ \phi \in \mathit{Fml}^- \mid T \vdash \phi  \}$ is decidable relative to~$\gamma$. It is therefore decidable (in the classical sense) whether or not ${T \vdash \exists \vec{y} \, P(\vec{x},\vec{y}) = 0}$, for each polynomial $P(\vec{x},\vec{y}) = 0$ with integer coefficients, since $\exists \vec{y} \, P(\vec{x},\vec{y}) = 0 \in \mathit{Fml}^-$. Since $T$ is $\Sigma_1$-sound, we conclude that every Diophantine set is decidable. Hence, by the DPRM-theorem (see \citep{Davis1973}), every recursively enumerable set is decidable. But this is absurd.
	\end{proof}
	
	We can show similarly that there is no numbering $\delta'$ of $A^\ast$ such that the set of \textup($\delta'$-codes of\textup) true sentences is decidable and the operations $\underline{\mathsf{S}}$, $\underline{+}$, $\underline{\times}$, $\underline{\prime}$, $\underline{=}$, $\underline{\neg}$ and $\underline{\forall}$ are recursive relative to $\delta'$.

	\paragraph{The Numbering $\bm{\theta}$.}
	
	We now construct a numbering $\theta$ of $A^\ast$ such that the tracking function of $\underline{\neg}$ is recursive and the set of ($\theta$-codes of) $T$-theorems is decidable, for any fixed consistent theory $T \supseteq \mathsf{R}$. As opposed to the above construction, we here employ the standard derivability relation such that only closed sentences are \mbox{$T$-theorems}. We then use the fact that membership of $\neg \phi$ in the sets ${\{ \phi \mid T \vdash \phi \}}$, $\{ \phi \mid T \vdash \neg \phi \}$ and $\{ \phi \mid T \not\vdash \phi, T \not\vdash \neg \phi \}$ is already fully determined by the respective membership of $\phi$. This does however not hold for $\phi \wedge \psi$, since for instance ${T \vdash \neg (\phi \wedge \neg \phi)}$ but $T \not\vdash \neg (\phi \wedge \phi)$, for any $T$-independent $\phi$. Hence, in what follows, $\theta(\phi \wedge \psi)$ is not defined along the lines of $\theta(\neg \phi)$ and thus $\underline{\wedge}$ does not have a recursive $\theta$-tracking function.
	
	Let $\{ \chi_n \mid n \in \mathbb{N} \}$ be an enumeration without repetitions of $T$-theorems which are not of the form $(\neg \phi)$, let $\{ \mu_n \mid n \in \mathbb{N} \}$ be an enumeration without repetitions of $T$-refutable sentences which are not of the form $(\neg \phi)$ and let $\{ \nu_n \mid n \in \mathbb{N} \}$ be an enumeration without repetitions of expressions not of the form $(\neg \phi)$ which are neither $T$-provable nor $T$-refutable (including non-well-formed expressions).
	
	First, we set $\theta_0(\chi_n) = 3 \cdot \langle 0, n \rangle$, $\theta_0(\mu_n) = 3 \cdot \langle 0, n \rangle + 1$ and $\theta_0(\nu_n) = 3 \cdot \langle 0, n \rangle +2$, with $\dom(\theta_0) = A^\ast \setminus \{ (\neg \phi) \mid \phi \in A^\ast \}$. We then extend $\theta_0$ to a numbering of $A^\ast$ in $\omega$-many steps by defining $\theta_{i+1}$ for each $i  < \omega$:
	\begin{align*}
	\dom(\theta_{i+1}) & = \dom(\theta_i) \cup \{ (\neg \phi) \mid \phi \in \dom(\theta_i) \}\\
	\theta_{i+1}(\phi) & = \theta_i(\phi), \text{ if } \phi \in \dom(\theta_i) \\
	\theta_{i+1}((\neg \phi)) & =
	\begin{cases}
	3 \cdot \langle i+1,j \rangle, & \text{ if } T \vdash \neg \phi \text{ and } \theta_i(\phi)=3 \cdot \langle i,j \rangle +1\\
	3 \cdot \langle i+1,j \rangle + 1, & \text{ if } T \vdash \phi \text{ and } \theta_i(\phi)=3 \cdot \langle i,j \rangle\\
	3 \cdot \langle i+1,j \rangle + 2, & \text{ if } T \not\vdash \phi, T \not\vdash \neg \phi \text{ and } \theta_i(\phi)=3 \cdot \langle i,j \rangle + 2
	\end{cases}
	\end{align*}
	Finally, we set $\theta = \bigcup_{i < \omega} \theta_i$, with $\dom(\theta) = \bigcup_{i < \omega} \dom(\theta_i)$. It is easy to check that $\theta$ is indeed a numbering of $A^\ast$. We now show that ${\mathsf{Pr}(x) \equiv \exists y < x \ x = \overline{3} \times y}$ defines $\{ \theta(\phi) \mid T \vdash \phi \}$. First, we define the height function
	\[
	h^\neg(\phi) = 
	\begin{cases}
	h^\neg(\xi) + 1 & \text{if } \phi \equiv (\neg \xi), \text{ for some expression } \xi,\\
	0 & \text{otherwise.}
	\end{cases}
	\]
	It is then easy to show by induction that for every $T$-theorem $\phi$ the following holds:
	\[
	\theta(\phi) = 3 \cdot \langle h^\neg(\phi),n \rangle \text{ and } \phi \equiv
	\begin{cases}
	\underbrace{(\neg \cdots (\neg}_{h^\neg(\phi)\text{-times}}\chi_n ) \cdots ) & \text{if } h^\neg(\phi) \text{ is even},\\
	\underbrace{(\neg \cdots (\neg}_{h^\neg(\phi)\text{-times}} \mu_n ) \cdots ) & \text{if } h^\neg(\phi) \text{ is odd}.
	\end{cases}
	\]
	Since there is a 1:1-correspondence between $T$-theorems $\phi$ and pairs of natural numbers $\langle h^\neg(\phi),n \rangle$, the set of $\theta$-codes of $T$-theorems is exactly $3 \mathbb{N}$. To show that the $\theta$-tracking function of $\underline{\neg}$ is recursive proceeds as in the case of~$\delta$.
	
	\paragraph{The Numbering $\bm{\eta}$.}
	
	Let $T$ be given as in the construction of the numbering $\delta$. We now construct a numbering $\eta$ of $A^\ast$ such that the tracking functions of the operations $\underline{\mathsf{S}}$, $\underline{+}$, $\underline{\times}$, $\underline{\prime}$, $\underline{=}$, $\underline{\neg}$, $\underline{\wedge}$ and $\underline{\rightarrow}$ are recursive relative to $\eta$ and $\{ \eta(\chi) \mid \mathbb{N} \models \chi \}$ is decidable. As in the case of $\delta$, we partition the natural numbers into decidable infinite sets corresponding to certain syntactic categories:
	{\small
	\begin{align*}
	& & & & \Upsilon^{\mathsf{junk}}(x,y) & = 18 \cdot \langle x,y \rangle +2,\\
	& & & & \Upsilon^{\mathsf{tm}}(x,y) & = 18 \cdot \langle x,y \rangle +5,\\
	\Lambda^{=}(x,y) & = 12 \cdot \langle x,y \rangle, & \Theta^{=}(x,y) & = 12 \cdot \langle x,y \rangle +1, & \Upsilon^{=}(x,y) & = 18 \cdot \langle x,y \rangle +8,\\
	\Lambda^{\neg}(x,y) & = 12 \cdot \langle x,y \rangle +3, & \Theta^{\neg}(x,y) & = 12 \cdot \langle x,y \rangle +4, & \Upsilon^{\neg}(x,y) & = 18 \cdot \langle x,y \rangle +11,\\
	\Lambda^{\wedge}(x,y) & = 12 \cdot \langle x,y \rangle + 6, & \Theta^{\wedge}(x,y) & = 12 \cdot \langle x,y \rangle +7, & \Upsilon^{\wedge}(x,y) & = 18 \cdot \langle x,y \rangle +14,\\
	\Lambda^{\forall}(x,y) & = 12 \cdot \langle x,y \rangle +9, & \Theta^{\forall}(x,y) & = 12 \cdot \langle x,y \rangle +10, & \Upsilon^{\forall}(x,y) & = 18 \cdot \langle x,y \rangle +17.
	\end{align*}
	}
	In what follows we define $\eta$ such that all numbers of true sentences of the form $(s = t)$ are elements of $\Lambda^{=} (\mathbb{N},\mathbb{N}) = \{ \Lambda^{=} (m,n) \mid m,n \in \mathbb{N} \} = 12 \mathbb{N}$, all numbers of true sentences of the form $(\phi \wedge \psi)$ are elements of $\Lambda^{\wedge} (\mathbb{N},\mathbb{N}) = 12 \mathbb{N} + 3$, etc. Since
	\[
	\Lambda^{=} (\mathbb{N},\mathbb{N}) \cup \Lambda^{\neg} (\mathbb{N},\mathbb{N}) \cup \Lambda^{\wedge} (\mathbb{N},\mathbb{N}) \cup \Lambda^{\forall} (\mathbb{N},\mathbb{N}) = 3 \mathbb{N},
	\]
	all ($\eta$-codes of) true sentences are elements of $3 \mathbb{N} = \{ 3 \cdot n \mid n \in \mathbb{N} \}$. Similarly, all ($\eta$-codes of) false sentences are elements of $3 \mathbb{N} +1$, and all ($\eta$-codes of) expressions which are not sentences are elements of $3 \mathbb{N} +2$. Thus $\mathsf{Pr}(x) \equiv \exists y < x \ x = \overline{3} \times y$ satisfies \textnormal{Löb($T, \eta$)} and $T \vdash \neg \mathsf{Pr}(\gn{\bot}{\eta})$, for any sound $T$. Moreover, the set $\{ \eta(\chi) \mid \mathbb{N} \models \chi \}$ is definable by a $\Delta^0_0(\mathsf{exp})$-formula.
	
	In a similar way to the above definitions of $\delta$ and $\theta$, we define $\eta$ by exploiting the fact that the membership of $\neg \phi$ and $\phi \wedge \psi$ in $\{ \chi \mid \mathbb{N} \models \chi \}$, ${\{ \chi \mid \mathbb{N} \not\models \chi \}}$ and ${\{ \chi \mid \chi \text{ is not a sentence} \}}$ is already fully determined by the respective memberships of $\phi$ and $\psi$.\footnote{We use the convention here that $\mathbb{N} \not\models \phi$ implies that $\phi$ is a sentence.} This however does not hold for $\underline{\forall}$, since there are expressions $\phi$, $\psi$ such that both $\mathbb{N} \models \forall x \phi$ and $\mathbb{N} \models \forall x \psi$ but $\phi \in \{ \chi \mid \mathbb{N} \models \chi \}$ and ${\psi \in \{ \chi \mid \chi \text{ is not a sentence} \}}$. Hence, in what follows, $\eta(\forall x \phi)$ is not defined along the lines of $\eta(\neg \phi)$ and $\eta(\phi \wedge \psi)$, resulting in the non-recursiveness of the $\eta$-tracking function of $\underline{\forall}$.
	
	Let $\{ \chi_n \mid n \in \mathbb{N} \}$ be an enumeration without repetitions of true sentences which are of the form $(\forall x \phi)$, let $\{ \nu_n \mid n \in \mathbb{N} \}$ be an enumeration without repetitions of false sentences which are of the form $(\forall x \phi)$, and let $\{ \mu_n \mid n \in \mathbb{N} \}$ be an enumeration without repetitions of formul{\ae} of the form $(\forall x \phi)$ which are not sentences (with $x$ and $\phi$ being  expressions). Moreover, let $\{ \xi_n \mid n \in \mathbb{N} \}$ be an enumeration without repetitions of strings which are not well-formed expressions.
	
	We first define a numbering $\eta_0$ of $\mathit{Term} \cup \{ (\forall x \phi) \mid x \in \mathit{Var}, \phi \in \mathit{Fml} \} $ by setting $\eta_0(s) = \Upsilon^{\mathsf{tm}}(0,\gamma(s))$ for $s \in \mathit{Term}$, and $\eta_0(\chi_n)= \Lambda^{\forall}(0,n) $, $\eta_0(\nu_n)= \Theta^{\forall}(0,n)$ and $\eta_0(\mu_n)= \Upsilon^{\forall}(0,n)$. Moreover, set $\eta^{\mathsf{junk}}(\xi_n)= \Upsilon^{\mathsf{junk}}(0,n) $.
	
	As above we extend $\eta_0$ to a numbering of the well formed expressions, by defining $\eta_{i+1}$ for each $i < \omega$:
	{\small
		\begin{align*}
		\dom(\eta_{i+1}) & = \dom(\eta_i) \cup \{ (s = t) \mid s,t \in \dom(\eta_i) \cap \mathit{Term} \} \cup\\
		& \quad \cup \{ (\neg \phi), (\phi \wedge \psi) \mid \phi, \psi \in \dom(\eta_i) \cap Form\};\\
		\eta_{i+1}(\phi) & = \eta_i(\phi), \text{ if } \phi \in \dom(\eta_i);\\
		\eta_{i+1}((s = t)) & =
		\begin{cases}
		\Lambda^{=}(\eta_i(s),\eta_i(t)), & \text{ if } s,t \in \dom(\eta_i), s,t \text{ are closed terms, } \mathbb{N} \models s=t,\\
		\Theta^{=}(\eta_i(s),\eta_i(t)), & \text{ if } s,t \in \dom(\eta_i),  s,t \text{ are closed terms, } \mathbb{N} \not\models s = t,\\
		\Upsilon^{=}(\eta_i(s),\eta_i(t)), & \text{ if } s,t \in \dom(\eta_i) \cap \mathit{Term},  s \text{ or } t \text{ is not a closed term;}
		\end{cases}\\
		\eta_{i+1}((\neg \phi)) & =
		\begin{cases}
		\Lambda^{\neg}(0,\eta_i(\phi)), & \text{ if } \phi \in \dom(\eta_i), \mathbb{N} \not\models \phi,\\
		\Theta^{\neg}(0,\eta_i(\phi)), & \text{ if } \phi \in \dom(\eta_i),  \mathbb{N} \models \phi,\\
		\Upsilon^{\neg}(0,\eta_i(\phi)), & \text{ if } \phi \in \dom(\eta_i) \cap Form,  \phi \text{ is not a sentence;}
		\end{cases}\\
		\eta_{i+1}((\phi \wedge \psi)) & =
		\begin{cases}
		\Lambda^{\wedge}(\eta_i(\phi),\eta_i(\psi)), & \text{ if } \phi, \psi \in \dom(\eta_i), \mathbb{N} \models \phi, \psi,\\
		\Theta^{\wedge}(\eta_i(\phi),\eta_i(\psi)), & \text{ if } \phi, \psi \in \dom(\eta_i), \mathbb{N} \not\models \phi \text{ or } \mathbb{N} \not\models \psi,\\
		\Upsilon^{\wedge}(\eta_i(\phi),\eta_i(\psi)), & \text{ if } \phi, \psi \in \dom(\eta_i) \cap Form, \phi \text{ or } \psi \text{ is not a sentence.}
		\end{cases}
		\end{align*}
	}
	
	Finally, we define $\eta = \left(  \bigcup_{i < \omega} \eta_i \right) \cup \eta^{\mathsf{junk}}$, with $\dom(\eta) = \left( \bigcup_{i < \omega} \dom(\eta_i) \right) \cup \mathit{Junk}$. Showing that $\eta$ is a numbering of $A^\ast$ such that $\{ \eta(\chi) \mid \mathbb{N} \models \chi \}$ can be defined by a $\Delta^0_0(\mathsf{exp})$-formula and the $\eta$-tracking functions of all operations operations $\underline{\mathsf{S}}$, $\underline{+}$, $\underline{\times}$, $\underline{\prime}$, $\underline{=}$, $\underline{\neg}$, $\underline{\wedge}$ and $\underline{\rightarrow}$ are recursive proceeds as in the case of $\delta$ above.
	
	\paragraph{The Numbering $\bm{\zeta}$.}
	
	We conclude with a construction of a numbering $\zeta$ of $A^\ast$ which is monotonic and such that $\zeta(T^\vdash) = \{ n \mid \Exists m \geq 10 \ n = m^3 \}$. Hence the formula ${\mathsf{Pr}(x) \equiv \exists y < x (x = y \times y \times y \wedge  y \geq \overline{10})}$ is a $\Delta^0_0$-binumeration of ($\zeta$-codes) of $T$-theorems.
	
	Let $\alpha_0 \equiv ( \mathsf{0} = \mathsf{0} )$ and $\alpha_{i+1} \equiv  (  \alpha_i \wedge (\mathsf{0} = \mathsf{0}))$, for $i < \omega$. Let $(\beta_i)_{i < \omega}$ be an enumeration (without repetitions) of the substrings of elements of $(\alpha_i)_{i < \omega}$ which are themselves not elements of $(\alpha_i)_{i < \omega}$. We assume that $(\beta_i)_{i < \omega}$ is given in length-first ordering, i.e., $i < j$ iff the length of $\beta_i$ is smaller than the length of $\beta_j$, or $\beta_i$ and $\beta_j$ have the same length and $\beta_i$ is lexicographically (relative to some total order on $A$) smaller than $\beta_j$. Moreover, let $\gamma_0 \equiv \mlq \prime \mrq$ and $\gamma_{i+1} \equiv \gamma_i \ast \mlq \prime \mrq$, for $i < \omega$. 
	
	Let $(\chi_i)_{i < \omega}$ be a monotonic enumeration of ${A^\ast \setminus ( (\alpha_i)_{i < \omega} \cup (\beta_i)_{i < \omega} \cup (\gamma_i)_{i < \omega} )}$ without repetitions. We thus have $j \leq i$ if $\chi_j$ is a subexpression of $\chi_i$. This can for instance be achieved by starting with a surjective and monotonic standard numbering of~$A^\ast$.
	
	We define a bijection $g \colon \mathbb{N} \to A^\ast$ by recursion as follows. Let $\prec$ denote the strict substring relation on $A^\ast$. Let $n \in \mathbb{N}$ and assume that $g(m)$ is already defined for all $m < n$. Let $(\mu_i)_{i< \omega}$ be any given enumeration of strings. For the sake of brevity, we also write $\widetilde{\mu}$ instead of $(\mu_i)_{i < \omega}$. We first set
	\[
	\mathfrak{m}(n,\widetilde{\mu}) := \min \{ k \mid \Forall m < n \ g(m) \not\equiv \mu_k  \}.
	\]
	That is, for a given enumeration $\widetilde{\mu}$ and input $n$, the function $\mathfrak{m}(\cdot,\widetilde{\mu}) \colon \mathbb{N} \to \mathbb{N}$ outputs the smallest index $k$ such that $\mu_k$ is not the $g$-value of any $m < n$. We now define
	\begin{align*}
	g(n) \equiv
	\begin{cases}
	\begin{cases}
	\chi_{\mathfrak{m}(n,\widetilde{\chi})} & \text{if }  T \vdash \chi_{\mathfrak{m}(n,\widetilde{\chi})} \text{ and}\\
	&\Forall \psi \prec \chi_{\mathfrak{m}(n,\widetilde{\chi})} \Exists k < n\ g(k) = \psi \\
	\alpha_{\mathfrak{m}(n,\widetilde{\alpha})} & \text{otherwise}
	\end{cases}
	& \text{if }  \Exists m \geq 10 \ n = m^3; \\
	\begin{cases}
	\chi_{\mathfrak{m}(n,\widetilde{\chi})} & \text{if }  T \not\vdash \chi_{\mathfrak{m}(n,\widetilde{\chi})} \text{ and}\\
	& \Forall \psi \prec \chi_{\mathfrak{m}(n,\widetilde{\chi})} \Exists k < n\ g(k) = \psi \\
	\gamma_{\mathfrak{m}(n,\widetilde{\gamma})} & \text{otherwise}
	\end{cases}
	& \text{if }  \Exists m \geq 10 \ n = m^3+1; \\
	\begin{cases}
	\beta_{\mathfrak{m}(n,\widetilde{\beta})} & \text{if } \Forall \psi \prec \beta_{\mathfrak{m}(n,\widetilde{\beta})} \Exists k < n\ g(k) = \psi \\
	\gamma_{\mathfrak{m}(n,\widetilde{\gamma})} & \text{otherwise}
	\end{cases}
	& \text{otherwise}.\\
	\end{cases}
	\end{align*}
	Since $\widetilde{\alpha}$, $\widetilde{\beta}$, $\widetilde{\gamma}$ and $\widetilde{\chi}$ are pairwise disjoint enumerations without repetitions, $g$ is injective. We now define $\zeta := g^{-1}$.
	
	We show that $\zeta$ is indeed a function with domain $A^*$, i.e., we have to show that $g$ is surjective. Informally speaking, this is tantamount to showing that there is no number from which on the values of $g$ only belong to three or less of the four enumerations.
	
	We first show that for each $i$ there exists $n$ such that $g(n) \equiv \alpha_i$. Assume that this is not the case, and let $i_0 = \max \{i \mid \exists n \ g(n) \equiv \alpha_i \}$. Then there exists an index $j_0$ such that for all $j > j_0$:
	\begin{equation}
	\label{equ:app:mon}
	\text{If } T \vdash \chi_{j} \text{ then } \Forall \psi \prec \chi_{j} \Exists k < n\ g(k) \equiv \psi.
	\end{equation}
	
	Let now $i > i_0$ be given such that $(\alpha_{i} \vee \alpha_{i}) \equiv \chi_j$ for some $j > j_0$ (note that such numbers exists since there are infinitely many strings of the form $(\alpha_{i} \vee \alpha_{i})$). Then $T \vdash (\alpha_{i} \vee \alpha_{i})$ and $\alpha_{i} \prec (\alpha_{i} \vee \alpha_{i})$, but since $i > i_0$ there is no $k$ such that $g(\alpha_{i}) \equiv k$, in contradiction to (\ref{equ:app:mon}).
	
	Analogously it can be shown that for each $i$ there exists $n$ such that $g(n) \equiv \gamma_i$ (for instance, by considering $\mlq \mathsf{v} \mrq \ast \gamma_i$ instead of $(\alpha_{i} \vee \alpha_{i})$).
	
	We now show that each $\beta_i$ is the $g$-value of some number, by induction over $i$. Since $\widetilde{\beta}$ is length-first-ordered, $\beta_0$ has no proper substrings and hence $g(0) \equiv \beta_0$. Let $i > 0$, and assume that each $\beta_j$ with $j <i$ is the $g$-value of some number. If $\psi \prec \beta_i$, then $\psi \equiv \beta_j$ for some $j < i$ or $\psi \equiv \alpha_l$ for some $l \in \mathbb{N}$. Since by induction hypothesis each $\beta_j$ with $j < i$ is the $g$-value of a number, and we have seen above that each $\alpha_l$ is the $g$-value of a number, there exists a smallest number $n$ such that $n$ is not of the form $m^3$ or $m^3+1$ (for $m \geq 10$), and the $g$-value of each (proper) substring of $\beta_i$ is smaller than $n$. Then $g(n) \equiv \beta_{\mathfrak{m}(n,\widetilde{\beta})} \equiv \beta_i$.
	
	We now show that each $\chi_i$ is the $g$-value of some number, by induction over $i$. Since $\widetilde{\chi}$ is monotonic, $\chi_0$ is a string of length $1$. Hence $g(10^3+1) \equiv \chi_0$.
	
	Let $i > 0$ be given and assume that each $\chi_j$ with $j <i$ is the $g$-value of some number. If $\psi \prec \chi_i$, then either $\psi \equiv \chi_j$ with $j < i$, or $\psi \equiv \alpha_l$ or $\psi \equiv \beta_l$ or $\psi \equiv \gamma_l$, for some $l \in \mathbb{N}$. Once again, by induction hypothesis each $\chi_j$ with $j < i$ is the $g$-value of a number, and we have seen above that each $\alpha_l$, $\beta_l$ and $\gamma_l$ is the $g$-value of a number. Hence $g(n) \equiv \chi_{\mathfrak{m}(n,\widetilde{\chi})} \equiv \chi_i$, where $n$ is the smallest number of the appropriate form, i.e. $m^3$ in case that $T \vdash \chi_i$, and $m^3+1$ otherwise (for $m \geq 10$), such that the $g$-value of each (proper) substring of $\chi_i$ is smaller than $n$.
	
	It remains to show that $\zeta$ is monotonic. Let $\phi \prec \psi$ be given strings.
	
	If $\psi \equiv \gamma_i$, for some $i \in \mathbb{N}$. Then $\phi \equiv \gamma_j$, for some $j < i$. It follows immediately from the definition of $g$ that if $\zeta(\gamma_i )=m $, there exists $k < m$ such that $g(k) \equiv \gamma_j$. Hence $\zeta(\gamma_j) < \zeta(\gamma_i)$.
	
	If $\psi \equiv \beta_i$ or $\psi \equiv \chi_i$, for some $i \in \mathbb{N}$, the claim immediately follows from the defining clauses of $g$.
	
	If $\psi \equiv \alpha_i$, for some $i \in \mathbb{N}$, then $\phi \equiv \alpha_j$, for some $j < i$, or $\phi \equiv \beta_l$ for some $l \in \mathbb{N}$. By definition of $g$, we have $\zeta(\alpha_j) < \zeta(\alpha_i)$.
	It remains to show that for all $i,l \in \mathbb{N}$:
	\begin{equation*}
	(B[i,l]) \quad \text{If } \beta_l \prec \alpha_i \text{ then } g^{-1}(\beta_l) < g^{-1}(\alpha_i).
	\end{equation*}
	We first show that $A[i]$ implies $\Forall l \, B[i,l]$, for any $i \in \mathbb{N}$; where $A[i]$ is the statement:
	\begin{equation*}
	\text{If } g(m^3) \equiv \alpha_i \text{ for some $m$, then } \Exists n, l \ (m-1)^3+1 < n < m^3 \Land g(n) \equiv \gamma_l.
	\end{equation*}
	
	\enquote{$\Forall i (A[i] \Rightarrow \Forall l B[i,l])$}: Let $g(m^3) \equiv \alpha_i$ for some $m$. By $A[i]$, we find $n$ and $l$ such that $g(n) \equiv \gamma_l$ and $(m-1)^3+1 < n < m^3$.
	Since $n$ cannot be of the form $s^3$ or $s^3+1$, it follows that $g(n)$ is obtained by the second case of the third clause of the definition of $g$. Hence, there exists $\psi \prec \beta_{\mathfrak{m}(n,\widetilde{\beta})}$ such that ($\ast$) $g(k) \not\equiv \psi$ for all $k < n$. Clearly, $\psi$ is not contained in $\widetilde{\beta}$. For otherwise, $\psi \equiv \beta_j$ for some $j \in \mathbb{N}$. Then $j < \beta_{\mathfrak{m}(n,\widetilde{\beta})}$ by definition of $\widetilde{\beta}$. But since $\beta_j$ is not the $g$-value of any number smaller than $n$, we have $\mathfrak{m}(n,\widetilde{\beta}) \leq j$, resulting in a contradiction.
	
	Hence, $\psi \equiv \alpha_j$ for some $j \in \mathbb{N}$. Since $g^{-1}(\alpha_i) \equiv m^3$, we have $g^{-1}(\alpha_{j'}) \leq (m-1)^3$ for every $j' < i$. By ($\ast$), $\alpha_j$ is not the $g$-value of any number smaller than $n$. Since $(m-1)^3 < n$ we thus conclude that $i \leq j$. Therefore $\alpha_i \preceq \alpha_j \prec \beta_{\mathfrak{m}(n,\widetilde{\beta})}$.
	
	We show now that $B[i,l]$ holds, for any given $l$. If $\beta_l \prec \alpha_i$, then also $\beta_l \prec \beta_{\mathfrak{m}(n,\widetilde{\beta})}$. Hence $l < \mathfrak{m}(n,\widetilde{\beta})$, by definition of $\widetilde{\beta}$. But then there exists $k < n$ such that $g(k) \equiv \beta_l$. Since $k < n < m^3$ we conclude that $g^{-1}(\beta_l) < g^{-1}(\alpha_i)$, as desired.
	
	\enquote{$\Forall i A[i]$}: We show now by induction that $A[i]$ holds for all $i \in \mathbb{N}$.
	
	Since $\alpha_0$ has length $5$, it has not more than $\frac{5 (5+1)}{2}$ substrings. By definition of $g$ and $\widetilde{\beta}$, it is easy to check that there is a number $n < 10^3$ such that the $g$-value of each proper substring of $\alpha_0$ is smaller than $n$. Hence $\alpha_0 \prec \beta_{\mathfrak{m}(n,\widetilde{\beta})}$. But since $g^{-1}(\alpha_0) \geq 10^3$ we have $g(n) \equiv \gamma_{\mathfrak{m}(n,\widetilde{\beta})}$.
	
	Let now $i > 0$ be given, and assume that $A[i-1]$ holds. Let $g^{-1}(\alpha_i) = m^3$. We assume that $A[i]$ does not hold. Let $n$ be fixed such that $(m-1)^3+1 < n < m^3$. We then have $g(n) \equiv \beta_{l}$ for some $l$. Hence,
	\begin{equation}
	\label{equation:proofmonoton}
	\Forall \psi \prec \beta_{l} \Exists k < n\ g(k) \equiv \psi.
	\end{equation}
	If $\beta_{l} \prec \alpha_{i-1}$, then $g^{-1}(\beta_l) < g^{-1}(\alpha_{i-1}) \leq (m-1)^3$ by $B[i-1,l]$ (using the fact that $\Forall i (A[i] \Rightarrow \Forall l B[i,l])$ and the induction hypothesis $A[i-1]$). But since $(m-1)^3 < n = g^{-1}(\beta_l)$, we conclude that $\beta_{l} \prec \alpha_{i-1}$ does not hold. We now show that $\beta_{l} \prec \alpha_{i}$. Assume this is not the case, then $\alpha_{i} \prec \beta_{l}$ by definition of $\widetilde{\beta}$. Using (\ref{equation:proofmonoton}) we can then derive the contradiction $g^{-1}(\alpha_i) < n < m^3 = g^{-1}(\alpha_i)$.
	
	We thus have shown that $\beta_{l} \prec \alpha_{i}$, but $\beta_{l} \not\prec \alpha_{i-1}$. Since $\beta_{l}$ is the $g$-value of any given number between $(m-1)^3+1$ and $m^3$, there are $\Delta$-many substrings of $\alpha_l$ which are not substrings of $\alpha_{i-1}$, with $\Delta := (m^3 - ((m-1)^3+1) -1$.
	
	We derive a contradiction by showing that 
	\begin{equation}
	\label{equation:proofmonoton2}
	\#(\alpha_{i}) - \#(\alpha_{i-1}) < \Delta,
	\end{equation}
	where $\#(s)$ denotes the number of substrings of $s$.  Let $\mathit{lh}(s)$ denote the length of~$s$. Since $\mathit{lh}(\alpha_{i}) = 5 + 8i$, it is easy to check that
	\[
	\#(\alpha_{i}) - \#(\alpha_{i-1}) \leq 8 \mathit{lh}(\alpha_{i}) = 64i + 40.
	\]
	Moreover $i \leq m - 10$, since $(i + 10)^3 \leq g^{-1}(\alpha_i) = m^3$. Hence 
	\[
	\#(\alpha_{i}) - \#(\alpha_{i-1}) \leq 8 \mathit{lh}(\alpha_{i}) = 64i + 40 \leq 64(m - 10) + 40 = 64m - 600
	\]
	It remains to check that $64m - 600 < \Delta$. But this is equivalent to
	\[
	64m - 600 < (m^3 - ((m-1)^3+1) -1 = 3m^2 - 3m -1,
	\]
	which in turn is equivalent to the true equation
	\[
	0 < 3m^2 - 67m +599.
	\]
	This concludes the proof of (\ref{equation:proofmonoton2}), in contradiction to there being $\Delta$-many substrings of $\alpha_l$ which are not substrings of $\alpha_{i-1}$. Thus $A[i]$ obtains.
	
	\section{Acknowledgements} I am indebted to Arnon Avron, Volker Halbach, Karl-Georg Niebergall, Lavinia Picollo, Dan Waxman and two anonymous referees for extensive and extremely helpful comments on earlier versions of this paper. Special thanks go to Albert Visser for his encouragement and many fruitful discussions on numberings over the last years, which greatly improved this paper. I would also like to thank Udi Boker, Luca Castaldo, Nachum Dershowitz, Mirko Engler, Michael Goldboim, David Kashtan, Ran Lanzet, Carlo Nicolai,  Fedor Pakhomov, Carl Posy, Lorenzo Rossi and Gil Sagi for helpful conversations on topics presented in this paper. Finally, I am grateful to the audiences of my talks on this material for their valuable feedback. This research was supported by the Minerva Foundation of the Max Planck Society.
	
	\setlength{\bibsep}{0.0pt}
	\bibliographystyle{rsl}
	\bibliography{mybib}{}
	
%	\vspace*{10pt}
%	\address{HUMBOLDT UNIVERSITY OF BERLIN\\
%		\hspace*{9pt}DEPARTMENT OF PHILOSOPHY\\
%		\hspace*{18pt}UNTER DEN LINDEN 6, D-10099 BERLIN\\
%		{\it E-mail}: grabmayb@hu-berlin.de}
%	\clearpage
\end{document}